\let\origsection=\section \def\section{\@ifstar{\origsection*}{\mysection}}
\def\mysection{\@startsection{section}{1}\z@{.7\linespacing\@plus\linespacing}{.5\linespacing}{\normalfont\scshape\centering\S}}
\renewcommand{\PrintDOI}[1]{\doi{#1}}
\numberwithin{equation}{section}
\numberwithin{figure}{section}
\def\rmlabel{\upshape({\itshape \roman*\,})}
\def\ag#1{	\tikz{\def\nn{#1};
	\pgfmathsetmacro\wie{3*\nn-1};
	\pgfmathsetmacro\mm{5*\nn};
	\pgfmathsetmacro\kk{\nn-1};
	
	\foreach \i in {0,...,\mm}{
		\coordinate (x\i) at (\i*360/\wie:1.3cm);}
		
	\foreach \i in {0,...,\wie}{	
		\foreach \j [evaluate=\j as \k using \i+\j+\nn] in {0,...,\kk}{
			\draw[green!75!black] (x\i)--(x\k);}
	}		
	
	\foreach \i in {0,...,\wie}{	
		\draw[black, fill=black] (x\i) circle (1pt);
	}		}}
\newcommand{\mref}[1]{\ifmmode\textrm{\ref{#1}}\else\ref{#1}\fi}
\let\polishlcross=\l
\def\l{\ifmmode\ell\else\polishlcross\fi}
\def\qand{\quad\text{and}\quad}
\let\emptyset=\varnothing
\let\setminus=\smallsetminus
\let\sm=\smallsetminus
\def\moverlay{\mathpalette\mov@rlay}
\def\mov@rlay#1#2{\leavevmode\vtop{   \baselineskip\z@skip \lineskiplimit-\maxdimen
   \ialign{\hfil$\m@th#1##$\hfil\cr#2\crcr}}}
\newcommand{\charfusion}[3][\mathord]{
    #1{\ifx#1\mathop\vphantom{#2}\fi
        \mathpalette\mov@rlay{#2\cr#3}
      }
    \ifx#1\mathop\expandafter\displaylimits\fi}
\newcommand{\dcup}{\charfusion[\mathbin]{\cup}{\cdot}}
\DeclareFontFamily{U}  {MnSymbolC}{}
\DeclareSymbolFont{MnSyC}         {U}  {MnSymbolC}{m}{n}
\DeclareFontShape{U}{MnSymbolC}{m}{n}{
    <-6>  MnSymbolC5
   <6-7>  MnSymbolC6
   <7-8>  MnSymbolC7
   <8-9>  MnSymbolC8
   <9-10> MnSymbolC9
  <10-12> MnSymbolC10
  <12->   MnSymbolC12}{}
\DeclareMathSymbol{\powerset}{\mathord}{MnSyC}{180}
\let\epsilon=\varepsilon
\let\eps=\epsilon
\let\rho=\varrho
\let\theta=\vartheta
\def\ZZ{{\mathds Z}}
\def\RR{{\mathds R}}
\newcommand{\Ex}{\mathscr{E}
	(n,s)}
\newcommand{\ex}{\mathrm{ex}(n,s)}
\newcommand{\Nn}{\mathrm{N}}
\newcommand{\ccM}{\mathscr{M}}
\newcommand{\al}{\alpha}
\newtheoremstyle{note}  {4pt}  {4pt}  {\sl}  {}  {\bfseries}  {.}  {.5em}          {}
\newtheoremstyle{introthms}  {3pt}  {3pt}  {\itshape}  {}  {\bfseries}  {.}  {.5em}          {\thmnote{#3}}
\newtheoremstyle{remark}  {2pt}  {2pt}  {\rm}  {}  {\bfseries}  {.}  {.3em}          {}
\theoremstyle{plain}
\newtheorem{thm}{Theorem}[section]
\newtheorem{lemma}[thm]{Lemma}
\newtheorem{prop}[thm]{Proposition}
\newtheorem{conj}[thm]{Conjecture}
\newtheorem{cor}[thm]{Corollary}
\newtheorem{fact}[thm]{Fact}
\newtheorem{claim}[thm]{Claim}
\theoremstyle{note}
\newtheorem{dfn}[thm]{Definition}
\theoremstyle{remark}
\newcommand*\patchAmsMathEnvironmentForLineno[1]{%
\expandafter\let\csname old#1\expandafter\endcsname\csname #1\endcsname
\expandafter\let\csname oldend#1\expandafter\endcsname\csname end#1\endcsname
\renewenvironment{#1}%
{\linenomath\csname old#1\endcsname}%
{\csname oldend#1\endcsname\endlinenomath}}%
\newcommand*\patchBothAmsMathEnvironmentsForLineno[1]{%
\patchAmsMathEnvironmentForLineno{#1}%
\patchAmsMathEnvironmentForLineno{#1*}}%
\def\GZS{\mathbf{Sym}}
\newcommand{\G}{\Gamma}
\begin{document}


\title[On the Ramsey-Tur\'an density of triangles]{On the Ramsey-Tur\'an density of triangles}

\author[T.~\L uczak]{Tomasz \L uczak}
\address{Adam Mickiewicz University, Faculty of Mathematics and Computer Science, Pozna\'n, Poland}
\email{tomasz@amu.edu.pl}
\thanks{The first author is supported by National Science Centre, Poland, grant 2017/27/B/ST1/00873.}

\author[J.~Polcyn]{Joanna Polcyn}
\address{Adam Mickiewicz University, Faculty of Mathematics and Computer Science, Pozna\'n, Poland}
\email{joaska@amu.edu.pl}

\author[Chr.~Reiher]{Christian Reiher}
\address{Fachbereich Mathematik, Universit\"at Hamburg, Hamburg, Germany}
\email{Christian.Reiher@uni-hamburg.de}

\subjclass[2010]{Primary: 05C35, Secondary: 05C69.}
\keywords{Ramsey-Tur\'an theory, extremal graph theory, triangle-free, independent sets.}

\begin{abstract}
One of the oldest results in modern graph theory, due to Mantel, asserts that every 
triangle-free graph on $n$ vertices has at most $\lfloor n^2/4\rfloor$ edges. 
About half a century later Andr\'asfai studied dense triangle-free graphs and proved 
that the largest
triangle-free graphs on $n$ vertices without independent sets of size $\alpha n$, 
where $2/5\le \alpha < 1/2$, are blow-ups of the pentagon. More than 50 further years 
have elapsed since Andr\'asfai's work. In this article we make the next step  
towards understanding the structure of dense triangle-free graphs without large 
independent sets.

Notably, we determine the maximum size of triangle-free graphs~$G$ on $n$ vertices 
with $\alpha (G)\ge 3n/8$ and state a conjecture on the structure of the densest 
triangle-free graphs $G$ with $\alpha(G) > n/3$. We remark that the case $\alpha(G) \le n/3$
behaves differently, but due to the work of Brandt this situation is fairly well understood.  
\end{abstract}

\maketitle



\section{Introduction}
\subsection{Ramsey-Tur\'an theory}
Mantel~\cite{M} proved in 1907 that balanced complete bipartite graphs maximise the number 
of edges among all triangle-free graphs on a given set of vertices. Later 
this result was generalized to $K_\ell$-free graphs by Tur\'an~\cite{T} and, asymptotically,
to $H$-free graphs by Erd\H{o}s and Stone~\cite{ES}, and by Erd\H{o}s and 
Simonovits~\cite{ESim}. All of these works had a decisive impact on
the development of extremal graph theory. 

Here we deal with one particular line of research systematically initiated by Vera~T.~S\'os
and known as Ramsey-Tur\'an theory (for a survey on this fascinating area 
we refer to Simonovits and S\'os~\cite{SS}). 
For $\ell\ge 3$ and $n\ge s\ge 0$ the {\sl Ramsey-Tur\'an number 
$\mathrm{ex}_\ell(n,s)$} is the maximum number of edges in a $K_\ell$-free graph on $n$ 
vertices which contains no independent set consisting of more than $s$ vertices, i.e.,
\[
	\mathrm{ex}_\ell(n,s)=\max_{G=(V,E)}\bigl\{|E|\colon K_\ell\nsubseteq G,\;|V|=n, 
	\textrm{ and } \al(G)\le s\bigl\}\,.
\]

One usually considers the case that $n$ is large and thus, 
instead of $\mathrm{ex}_\ell(n,s)$, one rather studies the 
{\sl Ramsey-Tur\'an density function 
$f_\ell\colon (0,1]\longrightarrow\RR$} defined by
\begin{equation*}
	f_\ell(\alpha)
	=
	\lim\limits_{n\rightarrow\infty}\frac{\mathrm{ex}_\ell(n,\alpha n)}{\binom n2}\,.
\end{equation*}
For the existence of this limit we refer to~\cite{EHSS}. Notice that Tur\'an's theorem implies 
\begin{equation}\label{eq:1908}
	\mathrm{ex}_\ell(n,s)
	=
	\bigl(1-o(1)\bigr) \frac{\ell-2}{\ell-1} \binom n2 
	\quad  \text{\ for\ } 
	s\ge \Big\lceil \frac{n}{\ell-1}\Big\rceil\,,
\end{equation}
whence $f_\ell(\alpha)=\frac{\ell-2}{\ell-1}$ for all $\alpha\ge \frac 1{\ell-1}$. 
The central problem to determine the limit
\[
	\rho(K_\ell)=\lim_{\alpha\to 0} f_\ell(\alpha)
\]
was solved for odd $\ell$ in~\cite{ES69} and proved to be very difficult for even $\ell$. 
Following the ingenious contributions by Szemer\'edi~\cite{Sz72} and by Bollob\'as 
and Erd\H{o}s~\cite{BE76}, this problem was finally solved by Erd\H{o}s, Hajnal, S\'os, 
and Szemer\'edi~\cite{EHSS}. 

Recently L\"uders and Reiher~\cite{LR-a} determined the value of $f_\ell(\alpha)$ 
whenever $\alpha$ is sufficiently small depending on $\ell$, so the extremal behaviour 
of $K_\ell$-free graphs with small (linear) independence number is now well understood. 
Despite this fact, we firmly believe that the problem to determine~$f_\ell(\alpha)$ 
{\it for all} $\alpha\in \bigl(0, \frac 1{\ell-1}\bigr)$ remains interesting. 

\subsection{Results on triangle-free graphs}

In this article we restrict our attention to the innocent looking case $\ell=3$, 
which seems surprisingly intricate to us. In other words, we concentrate on 
triangle-free graphs and, eliminating some indices, we study the behaviour of 
the function $\ex=\mathrm{ex}_3(n,s)$ and its `scaled' version $f(\alpha)=f_3(\alpha)$.
We also write
\[
	\Ex
	=
	\bigl\{G=(V,E)\colon K_3\nsubseteq G,\;|V|=n, \;\al(G)\le s,\textrm{ and } |E|=\ex\bigr\}
\]
for the corresponding families of extremal graphs. 

In an early contribution from 1962 Andr\'asfai~\cite{A} studied the question of 
how many edges a triangle-free graph on $n$ vertices, whose independence number 
is at most~$\alpha n$ for some given constant $\alpha$, can have. 
As a special case of~\eqref{eq:1908}, for $\alpha \ge \frac 12$ Mantel's Theorem 
yields~$\bigl\lfloor \frac{n^2}{4}\bigr\rfloor$ as an answer and for $\alpha \le \frac 13$ 
the ``trivial'' upper bound $\frac 12 \alpha n^2$ is essentially optimal 
(see the discussion at the end of this subsection), so the question is most interesting when 
$\alpha\in \bigl(\frac 13,\frac 12\bigr)$.
Andr\'asfai settled this problem for all $\alpha\in \bigl[\frac 25,\frac 12\bigr]$ 
and conjectured for $\alpha\in \bigl(\frac 13,\frac 12\bigr)$ that the answer is describable 
in terms of appropriate blow-ups of certain graphs nowadays bearing his name (see 
Conjecture~\ref{c:1} below). 

\begin{thm}[Andr\'asfai] \label{thm:1730}
	For every nonnegative integer $n$ and every integer $s\in\bigl[\frac25 n, \frac12 n\bigr]$
	we have 
	\[
		 \ex=n^2 - 4ns + 5s^2\,.
	\]
	In particular, $f(\alpha)=2-8\alpha +10\alpha^2$ holds for 
	every $\alpha\in\bigl[\frac 25, \frac 12\bigr]$.
	
\end{thm}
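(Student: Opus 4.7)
The plan is to establish both directions separately: an explicit construction for the lower bound, and a matching upper bound by induction on $n$.

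\emph{Lower bound.} I would exhibit the blow-up of $C_5$ with parts $A_1, \ldots, A_5$ of sizes $(3s-n,\,3s-n,\,n-2s,\,n-2s,\,n-2s)$ taken cyclically, each $A_i$ completely joined to $A_{i+1}$ (indices mod $5$). For $s \in [2n/5, n/2]$ these five sizes are non-negative and sum to $n$; the graph is triangle-free since $C_5$ is; and its independence number equals $\max_i(|A_i| + |A_{i+2}|) = \max(s,\, 2(n-2s)) = s$, where the final equality uses $s \ge 2n/5$. Expanding the edge count yields $(3s-n)^2 + 2(3s-n)(n-2s) + 2(n-2s)^2 = n^2 - 4ns + 5s^2$, as required.

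\emph{Upper bound by induction on $n$.} Let $G$ be triangle-free on $n$ vertices with $\alpha(G) \leq s$, and let $v$ attain the minimum degree $\delta := \delta(G)$. If $\delta \leq 2(n-2s) - 1$, then $G - v$ is still triangle-free on $n-1$ vertices with independence number at most $s$; the inductive hypothesis then gives $|E(G)| \le \delta + (n-1)^2 - 4(n-1)s + 5s^2 \le n^2 - 4ns + 5s^2$ after a short cancellation. Thus we may assume $\delta \ge 2(n-2s)$; note that for $s = n/2$ this condition is $\delta \ge 0$ and Mantel's theorem closes the case directly, so the interesting regime is $s < n/2$.

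\emph{High-minimum-degree regime.} The remaining task is to control triangle-free $G$ with $\alpha(G) \le s$ and $\delta(G) \ge 2(n-2s)$. Here I would invoke a structural dichotomy asserting that such a $G$ must either be bipartite (which, together with $\alpha(G) < n/2$, is excluded for $s < n/2$) or a blow-up of $C_5$. The endpoint $s = 2n/5$ is the classical Andr\'asfai--Erd\H{o}s--S\'os theorem; the interior of the range requires a sharpening forcing the same conclusion under the weaker hypothesis on $\delta$. Once $G$ is known to be a $C_5$-blow-up with part sizes $a_1, \ldots, a_5$, the problem reduces to maximising $\sum_i a_i a_{i+1}$ subject to $\sum_i a_i = n$ and $\max_i (a_i + a_{i+2}) \le s$; this quadratic program is optimised, up to the dihedral symmetries of $C_5$, by the construction above, with optimum $n^2 - 4ns + 5s^2$.

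The principal obstacle is the structural claim in the high-minimum-degree regime: while the classical Andr\'asfai--Erd\H{o}s--S\'os dichotomy suffices at $s = 2n/5$, the full open range $(2n/5, n/2)$ requires an appropriate quantitative refinement, potentially via Brandt's results or the techniques of L\"uders--Reiher mentioned in the introduction.
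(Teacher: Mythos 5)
Your lower-bound construction is correct and matches the paper's Fact~\ref{fact:1744}. Your inductive step for the low-minimum-degree case is also arithmetically sound: if $\delta(G)\le 2(n-2s)-1$ then deleting a minimum-degree vertex and applying the induction hypothesis to $(n-1,s)$ does give the bound, and the ranges check out.

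The gap is in the high-minimum-degree regime, and it is more serious than a ``quantitative refinement'' of Andr\'asfai--Erd\H{o}s--S\'os. You want: triangle-free, $\alpha(G)\le s$, and $\delta(G)\ge 2(n-2s)$ imply $G$ is bipartite or a blow-up of $C_5$. But the threshold $2(n-2s)$ shrinks to $0$ as $s$ approaches $n/2$, so for $s$ near the upper end of the interval the minimum-degree hypothesis is essentially vacuous and the dichotomy as stated is simply false: there are triangle-free graphs with tiny minimum degree and independence number well below $n/2$ (e.g.\ blow-ups of longer odd cycles or of larger Andr\'asfai graphs with a few parts shrunk) that are neither bipartite nor $C_5$-blow-ups. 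None of the cited structural theorems (AES, H\"aggkvist, Jin, Brandt--Thomass\'e) apply at a degree threshold that depends on $s$ in this way; they all require a fixed fraction of $n$ (at least $n/3$). So a genuinely new argument would be needed here, and in effect that argument would have to do all the work of the theorem. The paper avoids classifying high-minimum-degree graphs altogether: it uses Zykov-style symmetrisation (Subsection~\ref{subsec:Zykov}, Lemma~\ref{f:NROs}, and Lemma~\ref{f:twosets}) to pass to an extremal graph containing two disjoint independent $s$-sets $A$, $A'$ with $K(A',A)\subseteq E(G)$ and $|A'|=3s-n$, and then a one-line Mantel estimate on $G-(A\cup A')$ (Lemma~\ref{lem:g2}) finishes the proof without any structure theorem. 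If you want to salvage your approach, you would have to either strengthen the hypothesis to a fixed-fraction degree bound (losing part of the range of $s$) or restrict the structural claim to graphs already known to be near-extremal, and neither comes for free.
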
  

Andr\'asfai~\cite{A} also determined the extremal families $\Ex$ for 
$s\in\bigl[\frac25 n, \frac12 n\bigr]$ and it turned out that all extremal graphs 
in these families are blow-ups of the pentagon. We display a sample case in 
Figure~\ref{fig:pentagon} and defer a more detailed discussion to  
Subsection~\ref{subsec:1242}.  
Our main result is a similar quadratic formula applicable to every
$\alpha\in \bigl[\frac38, \frac25\bigr]$.

\begin{figure}[ht]
	\centering
	\begin{tikzpicture}[scale=0.8]
	
	\coordinate (x1) at (-2.5,4);
	\coordinate (x2) at (2.5,4);
	\coordinate (x3) at (-4.2,2);
	\coordinate (x4) at (4.2,2);
	\coordinate (x5) at (0,0);
	\coordinate (x6) at (-3,3.7);
	
	\draw [black!20!white, line width=16pt] (x1) -- (x2);
	\draw [black!20!white, line width=18pt] (x2) -- (x4);
	\draw [black!20!white, line width=18pt] (x1) -- (x3);
	\draw [black!20!white, line width=18pt] (x4) -- (x5);
	\draw [black!20!white, line width=18pt] (x5) -- (x3);
	
	\begin{pgfonlayer}{front}
	\foreach \i in {1,2}{
		\draw[blue!75!black, line width=2pt,fill opacity=1] (x\i) ellipse (1.8cm and 18pt);
		\fill[blue!15!white,opacity=1] (x\i) ellipse (1.8cm and 18pt);
		\node at (x\i) {\large\textcolor{blue!75!black}{$3s-n$}};
	}	
	\end{pgfonlayer}
	
	\begin{pgfonlayer}{front}
\foreach \i in {3,4,5}{
	\draw[red!75!black, line width=2pt,fill opacity=1] (x\i) ellipse (1.3cm and 15pt);
\fill[red!10!white,opacity=1] (x\i) ellipse (1.3cm and 15pt);
\node at (x\i) {\large\textcolor{red!75!black}{$n-2s$}};
}	
\end{pgfonlayer}

	\end{tikzpicture}
	\caption{Example of a graph in $\Ex$ for $2n/5\le s\le n/2$.}
	\label{fig:pentagon} 
\end{figure}

\begin{thm}\label{thm:2338} 
	If $n\ge 0$ and $s\in \bigl[\frac 38 n, \frac 25n\bigr]$, then	
	\[
		\ex = 3n^2 - 15ns + 20s^2\,.
	\]
	Consequently, we have 
	\[
		f(\alpha)= 6-30\alpha+40\alpha^2
		\quad\textrm{for every}\quad 
		\alpha\in\bigl[\tfrac 38, \tfrac 25\bigr]\,.  
	\]
\end{thm}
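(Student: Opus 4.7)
\emph{Construction (lower bound).} I would establish $\mathrm{ex}(n,s) \ge 3n^2 - 15ns + 20s^2$ by blowing up the third Andr\'asfai graph $\Gamma_3$, which is the circulant on $\ZZ_8$ with connection set $\{\pm 3, \pm 4\}$: triangle-free, $3$-regular, with $12$ edges, and of independence number exactly $3$, its maximum independent sets being precisely the eight ``arcs'' $\{i, i+1, i+2\} \pmod 8$. The five vertices $\{0, 3, 6, 1, 4\}$ span an induced $C_5$ in $\Gamma_3$. I blow up each of these five vertices by an independent set of size $a = 3s - n$ and each of the remaining three vertices $\{2,5,7\}$ by an independent set of size $b = 2n - 5s$, joining two parts whenever their labels are adjacent in $\Gamma_3$. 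A direct check gives $5a + 3b = n$, triangle-freeness, independence number $2a + b = s$ (using $a \ge b$, equivalent to $s \ge 3n/8$), and edge count $5a^2 + 5ab + 2b^2 = 3n^2 - 15ns + 20s^2$.

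\emph{Upper bound.} Let $G$ be a triangle-free graph on $n$ vertices with $\alpha(G) \le s$, $s \in [3n/8, 2n/5]$. My plan is to prove $|E(G)| \le 3n^2 - 15ns + 20s^2$ by induction on $n$, anchored at the boundary $s = 2n/5$ by Andr\'asfai's Theorem~\ref{thm:1730}; note that $n^2 - 4ns + 5s^2$ and $3n^2 - 15ns + 20s^2$ agree when $s = 2n/5$. The fundamental inputs are that every $N(v)$ is independent (so $d(v) \le s$) and, by triangle-freeness, $N(u) \cap N(v) = \emptyset$ for every edge $uv$ (so $d(u) + d(v) \le n$). I would pick a vertex $v$ of suitably chosen degree, partition $V$ as $\{v\} \cup N(v) \cup M$ with $M = V \setminus (\{v\} \cup N(v))$, and bound $|E|$ through $d(v)$, the bipartite term $e(N(v), M)$, and $|E(G[M])|$---the last handled by the induction hypothesis, or by Theorem~\ref{thm:1730} if the parameters of $G[M]$ have left the present range.

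\emph{Main obstacle.} The crux is the upper bound: the naive estimate $|E| \le s(n - s)$ (from $d(v) \le s$ and $d(u)+d(v) \le n$) exceeds the target $3n^2 - 15ns + 20s^2$ by a quadratic-order term, so a substantially sharper argument is needed. I expect the proof to require a stability statement identifying a near-extremal $G$ as structurally close to a blow-up of $\Gamma_3$, together with a precise way to partition the vertices of $G$ into eight classes mirroring the parts of such a blow-up---perhaps via an equivalence relation based on common neighborhoods or on the structure of maximum independent sets of $G$. Controlling how this approximate structure propagates through the recursive step, so that the accumulated estimate closes out to exactly $3n^2 - 15ns + 20s^2$ with no slack, will be the main technical hurdle. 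The endpoint cases $s = 3n/8$ (where $a = b = n/8$, the balanced blow-up of $\Gamma_3$) and $s = 2n/5$ (where $b = 0$, degenerating to Andr\'asfai's pentagonal blow-up), together with integrality of $a$ and $b$, can be handled by routine separate verification.
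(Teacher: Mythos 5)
Your lower-bound construction (the blow-up of $\Gamma_3$ with five parts of size $3s-n$ and three of size $2n-5s$) is correct and coincides with the paper's Fact~\ref{fact:1744}. The gap is the upper bound, and you flag it yourself: the vertex-deletion recursion you sketch (remove $v$ and $N(v)$, bound $e(N(v),M)$, recurse on $G[M]$) gives only the crude estimate $e(G)\le s(n-s)$, and ``closing the quadratic-order gap'' is left as ``the main technical hurdle.'' That hurdle is the entire content of the theorem, and the induction scheme you propose is not obviously viable: deleting $v$ together with a neighbourhood of up to $s$ vertices gives no control of $\alpha(G[M])$ relative to the new vertex count $n-1-d(v)$, so the ratio $s/v(G[M])$ typically leaves $[3/8,2/5]$; falling back on Theorem~\ref{thm:1730} for $G[M]$ then over-counts, and the loose bound $e(N(v),M)\le d(v)\bigl(n-1-d(v)\bigr)$ does not recover the deficit. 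Your ``stability statement'' is anticipated but never supplied.

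The paper's route is organised around Zykov-style symmetrisation, not vertex deletion. The key point (Lemmas~\ref{f:isolating}, \ref{f:NROs}, \ref{f:twosets}) is that an extremal $G\in\Ex$ can be replaced, without losing edges, triangle-freeness, or the bound $\alpha(G)\le s$, by an extremal graph containing two disjoint independent $s$-sets $A$, $B$ with subsets $A'\subseteq B$, $B'\subseteq A$ of size $3s-n$ and $K(A',A)\cup K(B',B)\subseteq E(G)$. A strengthened Andr\'asfai bound (Proposition~\ref{f:k2strong}), which tolerates vertices of degree exceeding $s$ as long as they admit suitable matchings into the rest of the graph, is applied to $G-(A'\cup B'\cup C')$, where $C$ is a third independent $s$-set produced by a further round of symmetrisation (Lemma~\ref{f:3sets}, supplemented by Lemma~\ref{f:ABC} for the case where $C$ meets both $A$ and $B$). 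The final induction deletes just two vertices, one from $A'$ and one from $B'$, and splits on whether this drops $\alpha$ below $s$, which is a very different recursion from peeling off an entire closed neighbourhood. None of this machinery---symmetrisation preserving independence number, the matching-based isolating lemma, the degree-relaxed Andr\'asfai estimate, the three-set configuration lemma---appears in your plan; these are precisely the ingredients that replace the stability analysis you correctly sense is needed but do not provide.
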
 

It follows from our proof that all extremal graphs for this result, i.e., all 
graphs in a class of the form $\Ex$ with $s\in \bigl[\frac 38 n, \frac 25n\bigr]$
are blow-ups of the well-known Wagner graph here denoted by $\Gamma_3$, 
which is a  triangle-free cubic graph on $8$ vertices. 
A special case is shown in Figure~\ref{fig:A8}. 

\begin{figure}[ht]
	\centering
	\begin{tikzpicture}[scale=0.5]
	
	\coordinate (x1) at (-3,5);
	\coordinate (x2) at (3,5);
	\coordinate (x3) at (8,2);
	\coordinate (x4) at (8,-2);
	\coordinate (x5) at (3,-5);
	\coordinate (x6) at (-3,-5);
	\coordinate (x7) at (-8,-2);
	\coordinate (x8) at (-8,2);
	
	\draw [black!20!white, line width=8pt] ($(x1)+(-.6,0)$) -- ($(x4)+(0,-.3)$);
	\draw [black!20!white, line width=8pt] ($(x1)+(-.6,0)$)  -- ($(x5)+(.6,0)$) ;
	\draw [black!20!white, line width=8pt] ($(x1)+(-.6,0)$)  --  ($(x6)+(-.6,0)$);
	\draw [black!20!white, line width=8pt] ($(x2)+(.6,0)$) -- ($(x5)+(.6,0)$) ;
	\draw [black!20!white, line width=8pt] ($(x2)+(.6,0)$)  --  ($(x6)+(-.6,0)$);
	\draw [black!20!white, line width=8pt] ($(x2)+(.6,0)$)  -- ($(x7)+(0,-.3)$);
	\draw [black!20!white, line width=8pt] ($(x3)+(0,.2)$) -- ($(x6)+(-.6,0)$) ;
	\draw [black!20!white, line width=8pt] ($(x3)+(0,.2)$) -- ($(x7)+(0,-.3)$);
	\draw [black!20!white, line width=8pt] ($(x3)+(0,.2)$) -- ($(x8)+(0,.2)$);
	\draw [black!20!white, line width=8pt] ($(x4)+(0,-.3)$) -- ($(x7)+(0,-.3)$);
	\draw [black!20!white, line width=8pt] ($(x4)+(0,-.3)$) -- ($(x8)+(0,.2)$);
	\draw [black!20!white, line width=8pt] ($(x5)+(.6,0)$)  -- ($(x8)+(0,.2)$);
	
	\begin{pgfonlayer}{front}
	\foreach \i in {3,5,8}{
		\draw[red!75!black, line width=2pt,fill opacity=1] (x\i) ellipse (1.6cm and 18pt);
		\fill[red!15!white,opacity=1] (x\i) ellipse (1.6cm and 18pt);
		\node at (x\i) {\large\textcolor{red!75!black}{{\small $2n-5s$}}};
	}	
	\end{pgfonlayer}
	
	\begin{pgfonlayer}{front}
	\foreach \i in {1,2,4,6,7}{
			\draw[blue!75!black, line width=2pt,fill opacity=1] (x\i) ellipse (2cm and 21pt);
			\fill[blue!25!white,opacity=1] (x\i) ellipse (2cm and 21pt);
			\node at (x\i) {\large\textcolor{blue!75!black}{$3s-n$}};
	}	
	\end{pgfonlayer}
	
	\end{tikzpicture}
	\caption{Example of a graph in $\Ex$ for $3n/8\le s\le 2n/5$.}
	\label{fig:A8}
\end{figure}

We suspect that some of the tools we have developed for proving Theorem~\ref{thm:2338}
will be relevant for a complete determination of the function $f$, even though some 
new ideas will certainly be required. Before stating our version of Andr\'asfai's 
conjecture on $f$ we briefly recall another known result on this function.

Notice that every triangle-free graph $G$ satisfies
\begin{equation*}
	\Delta(G)\le \alpha (G)\,,
\end{equation*}
for the neighbourhood of every vertex is an independent set. 
Therefore
\[
	\ex\le \tfrac12 ns
\]
holds for all $n\ge s\ge 0$ and $f(\alpha)\le \alpha$ for $\alpha>0$ follows. 
We call these estimates the {\sl trivial bounds} on $\ex$ and $f(\alpha)$, respectively.  
  
In the regime $s < \frac13 n$ Brandt~\cite{B10} provided several constructions of $s$-regular 
graphs on~$n$ vertices whose independence number is equal to $s$, and his work implies 
$f(\alpha) = \alpha$ for all $\alpha \in \bigl(0, \frac 13\bigr]$. 
In view of this result 
and the above theorems it remains to study the behaviour of $f(\alpha)$ 
for $\alpha\in \bigl(\frac13, \frac 38\bigr)$. The next subsection offers a conjecture 
for this range.

\subsection{A conjecture on triangle-free graphs}\label{subsec:1242}
Let us introduce one more piece of terminology. By a {\sl blow-up} of a graph $G$ we mean 
any graph $\hat G$ obtained from $G$ be replacing each of its vertices $v_i$ by an 
independent set $V_i$ (that can be empty) and joining two subsets~$V_i$ and~$V_j$ of 
vertices of $\hat G$ by all $|V_i||V_j|$ possible edges whenever the pair $\{v_i,v_j\}$ 
is an edge of~$G$. For instance, Figure~\ref{fig:pentagon} shows a blow-up of the pentagon,
where three consecutive vertices are replaced by independent $(n-2s)$-sets (drawn red) 
while the remaining two vertices are enlarged to (blue) independent $(3s-n)$-sets.

Let us recall that Andr\'asfai graphs are Cayley graphs $\bigl(\ZZ/(3k-1)\ZZ, S\bigr)$
with $k\ge 1$ and $S\subseteq \ZZ/(3k-1)\ZZ$ being a sum-free subset of size~$k$.
For definiteness, we denote the graph obtained for $S=\{k, \ldots, 2k-1\}$
by $\Gamma_k$. So explicitly two vertices $i$ and $j$ of $\Gamma_k$ are adjacent if and only 
if $i-j\in S$. Following $\Gamma_1=K_2$ the first few Andr\'asfai graphs are depicted 
in Figure~\ref{fig:ag}. 

\begin{figure}[ht]
		\centering
		\begin{multicols}{5}
			\ag{2} \\ \ag{3} \\ \ag{4} \\ \ag{5} \\ \ag{6}
		\end{multicols}
		\caption{Andr\'asfai graphs $\G_2$, $\G_3$, $\G_4$, $\G_5$, and $\G_6$.}
		\label{fig:ag}
\end{figure}

Notice that $\Gamma_k$ is a triangle-free, $k$-regular graph on $3k-1$ vertices 
whose independence number is exactly $k$. Therefore, balanced blow-ups of $\Gamma_k$ 
show that the trivial bound on~$f(\alpha)$ is optimal if $\alpha$ is of the form
$\frac{k}{3k-1}$, i.e., that we have 
\begin{equation}\label{eq:1803}
	f\Big(\frac k{3k-1}\Big)
	=
	\frac k{3k-1}\quad \text{\ for all\ }\ k\ge 1\,.
\end{equation}

Like Andr\'asfai we believe that whenever $n\ge 0$ and $s\in\bigl(\frac13 n, \frac12 n\bigr]$
there exists a graph $G\in \Ex$ which is a blow-up of an appropriate Andr\'asfai graph. 
This leads to $f(\alpha)$ being piecewise quadratic on $\bigl(\frac13, \frac 12\bigr]$
with critical values at $\alpha_k=\frac{k}{3k-1}$ for $k\ge 2$. By optimizing over all
blow-ups of Andr\'asfai graphs we were led to the following function. 

\begin{dfn}\label{dfn:1707}
	For integers $n\ge s\ge 0$ we set 
	\[
		g(n, s)=
		\begin{cases}
			\frac 12ns\,, & \text{ if $s\le \frac13 n$} \cr
					g_k(n, s)\,, & 
					\text{ if $\frac{k}{3k-1}n \le s < \frac{k-1}{3k-4}n$ for some $k\ge 2$} \cr
			\bigl\lfloor\frac{n^2}{4}\bigr\rfloor\,, & \text{ if $\frac n2 \le s\le n$,}  
		\end{cases}
	\]
	where 
	\begin{equation}\label{eq:1727}
		g_k(n, s)=\tfrac12 k(k-1)n^2-k(3k-4)ns+\tfrac12(3k-4)(3k-1)s^2\,.
	\end{equation}
\end{dfn} 

\begin{conj}\label{c:1}
	For all integers $n\ge s\ge 0$ we have $\ex\le g(n, s)$. In other words, 
	every triangle-free $n$-vertex graph $G$ with $\alpha(G)\le s$ has at most $g(n, s)$
	edges. 
\end{conj}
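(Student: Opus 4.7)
The natural strategy is to prove the conjecture by induction on $k \ge 2$, where the $k$th step handles $s \in \bigl[\tfrac{k}{3k-1}n, \tfrac{k-1}{3k-4}n\bigr)$. Andr\'asfai's Theorem~\ref{thm:1730} supplies the base case $k = 2$, Theorem~\ref{thm:2338} supplies $k = 3$, and Brandt's constructions cover $s \le n/3$, so only $k \ge 4$ requires new work. Fix such a $k$ and let $G$ be an extremal triangle-free graph on $n$ vertices with $\alpha(G) \le s$ in this range.

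The first step is a Zykov-type symmetrisation: whenever $G$ contains vertices $u \ne v$ with $N(u) \subseteq N(v)$, deleting $u$ and inserting a twin of $v$ preserves triangle-freeness (any new triangle would force an older triangle through $v$) and cannot enlarge $\alpha(G)$. Iterating, I may assume that $G$ is a blow-up of an irreducible triangle-free ``core'' $H$, with part sizes $x_1,\dots,x_h$ that sum to $n$ and satisfy $\max_I \sum_{i \in I} x_i \le s$ over the independent sets $I$ of $H$.

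The crux is to show that $H \cong \G_k$. To this end I would combine the structural theorems for dense triangle-free graphs (in the spirit of Brandt~\cite{B10} and L\"uders--Reiher~\cite{LR-a}), which should force $H$ to admit a homomorphism onto some Andr\'asfai graph $\G_{k_0}$ of bounded order, with the density of $G$ and the inductive hypothesis (which restricts the permissible density of blow-ups of alternative candidate cores $H'$) to pin down $k_0 = k$. Once $H = \G_k$ is established, the problem reduces to maximising $\sum_{ij \in E(\G_k)} x_i x_j$ over nonnegative $x_i$ satisfying $\sum_{i} x_i = n$ and $\sum_{i=0}^{k-1} x_{(j+i) \bmod (3k-1)} \le s$ for every $j$. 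Summing the three window constraints at $j$, $j+k$, and $j+2k$---which together cover each position once except for a double count at position $j$---yields the pointwise bound $x_j \le 3s - n$. Cyclic symmetry and an elementary exchange argument then force the maximum to arise from the explicit pattern with three ``small'' parts of size $(k-1)n - (3k-4)s$ at cyclic positions $\{0, k-1, 2k-1\}$ and $3k-4$ ``large'' parts of size $3s - n$ elsewhere, producing the closed-form value $g_k(n,s)$ of~\eqref{eq:1727} and generalising the patterns of Figures~\ref{fig:pentagon} and~\ref{fig:A8}.

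The principal obstacle is the core-classification step: the existing structural theorems only locate $H$ within a finite family, and ruling out other candidate cores---for example, suitable Cayley graphs on non-cyclic groups or certain graph covers of $\G_k$---becomes delicate as $s/n$ approaches the left endpoint $\tfrac{k}{3k-1}$, where the inductive gap narrows. I expect that extending the stability framework developed for Theorem~\ref{thm:2338}, with its careful analysis of vertex neighbourhoods and of edges across maximum independent sets, to arbitrarily large $k$ uniformly in $n$ and $s$ will be the main technical hurdle.
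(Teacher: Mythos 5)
The statement you address is labelled \emph{Conjecture}~\ref{c:1}, and the paper does not prove it: it is established only for $k=2$ (Andr\'asfai, Theorem~\ref{thm:1730}) and $k=3$ (Theorem~\ref{thm:2338}), so the range $s\in\bigl(\tfrac13 n,\tfrac38 n\bigr)$---equivalently $k\ge 4$---remains open. Your proposal correctly reduces to this open range and correctly names the core-classification step as the crux, but it does not resolve that step; as written it is a research programme with a conjectural kernel, not a proof.

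Two concrete issues. First, replacing a vertex $u$ with $N(u)\subseteq N(v)$ by a twin of $v$ does preserve triangle-freeness and does not increase $\alpha$, but iterating only yields a blow-up of an \emph{irreducible} triangle-free graph~$H$, and irreducibility imposes no bound on $|V(H)|$: odd cycles $C_{2m+1}$, for example, are irreducible and triangle-free for every $m$. Bounding the order of $H$ already requires the density/minimum-degree machinery you only gesture at. Second, and more seriously, the Brandt--Thomass\'e theorem you invoke does \emph{not} force a homomorphism into an Andr\'asfai graph alone; it forces one into an Andr\'asfai graph \emph{or} a Vega graph, and the paper's ``Minimum degree'' subsection warns explicitly that Vega graphs genuinely appear: there are non-trivial blow-ups of Vega graphs lying in $\Ex$. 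Your sketch has no mechanism for excluding these candidate cores, and the authors state that handling them is delicate enough to occupy a separate forthcoming article~\cite{Vega}, which even then only establishes the conjecture on a short interval $\bigl[\tfrac{k}{3k-1},\tfrac{k}{3k-1}+\eps_k\bigr]$ for each $k$, not on the full range. In addition, Brandt--Thomass\'e needs minimum degree strictly above $\tfrac13 n$, which is not automatic from $\alpha(G)\le s$ and must first be arranged by preprocessing in the spirit of Fact~\ref{f:NRO} and Lemma~\ref{f:NROs}.

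Your closing optimisation of blow-ups of $\G_k$ is sound: summing the three window constraints at $j$, $j+k$, $j+2k$ gives $x_j\le 3s-n$, and the extremal pattern with three small parts recovers $g_k(n,s)$, matching Fact~\ref{fact:1744} and the constructions of Figures~\ref{fig:pentagon} and~\ref{fig:A8}. But that is the lower-bound direction; the upper bound---that no other core does better---is precisely what Conjecture~\ref{c:1} asserts and what neither you nor the paper proves for $k\ge 4$.
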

	
Admittedly, one needs some time to get used to the functions $g_k(n, s)$ introduced
in~\eqref{eq:1727} but we believe that the motivation in terms of optimal blow-ups of 
Andr\'asfai graphs renders the conjecture sufficiently natural (see also~\cite{Vega}*{Lemma~3.3}). 
Observe that
the functions~$g_2(n, s)$ and $g_3(n, s)$ are precisely the quadratic forms appearing 
in the Theorems~\ref{thm:1730} and~\ref{thm:2338}. Therefore, Conjecture~\ref{c:1}
is only open for $s\in \bigl(\frac13 n, \frac 38 n\bigr)$.    
Let us briefly indicate one construction showing that, if true, Conjecture~\ref{c:1}
is optimal for the most interesting range of $\frac sn$. 

\begin{fact}\label{fact:1744}
	If $s\in\bigl(\frac13 n, \frac 12 n\bigr)$, then $\ex\ge g(n, s)$.
\end{fact}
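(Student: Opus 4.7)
The plan is to exhibit, for each integer $s$ in the given range, an explicit triangle-free $n$-vertex graph $G$ with $\alpha(G)\le s$ and $|E(G)|\ge g(n,s)$. Let $k\ge 2$ be the unique integer with $\tfrac{k}{3k-1}\le\tfrac sn<\tfrac{k-1}{3k-4}$, so that $g(n,s)=g_k(n,s)$ by Definition~\ref{dfn:1707}. Take $G$ to be the blow-up of the Andr\'asfai graph $\Gamma_k$ (viewed as the Cayley graph on $\ZZ/(3k-1)\ZZ$ with connection set $\{k,\dots,2k-1\}$) obtained by replacing the three \emph{red} vertices $0$, $k$ and $2k$ with independent sets of common size $a=(k-1)n-(3k-4)s$ and the remaining $3k-4$ \emph{blue} vertices with independent sets of common size $b=3s-n$. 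A direct check shows $a,b\ge 0$ on this range, $3a+(3k-4)b=n$, and hence $|V(G)|=n$; triangle-freeness of $G$ is immediate.

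For the bound on $\alpha(G)$, let $\tilde I\subseteq V(\Gamma_k)$ be independent and let $r\in\{0,1,2\}$ count its reds; here $r\le 2$ because $\{0,k\}$ and $\{k,2k\}$ are edges (while $\{0,2k\}$ is not, since $2k\notin\{k,\dots,2k-1\}\pmod{3k-1}$). In general $|\tilde I|\le k$, but when $r=0$ we in fact have $|\tilde I|\le k-1$: each length-$k$ arc $A_j=\{j,\dots,j+k-1\}\pmod{3k-1}$ must contain a red, since the starting indices of arcs covering $0$, $k$, $2k$ are $\{2k,\dots,3k-2,0\}$, $\{1,\dots,k\}$ and $\{k+1,\dots,2k\}$ respectively, and these three $k$-element sets jointly cover all $3k-1$ starting indices with a single overlap at $j=2k$. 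Using $b\ge a$ on our range, the weight of $\tilde I$ in the blow-up is therefore at most
\begin{equation*}
	\max\bigl\{(k-1)b,\;a+(k-1)b,\;2a+(k-2)b\bigr\}=a+(k-1)b,
\end{equation*}
and a direct substitution yields $a+(k-1)b=s$; thus $\alpha(G)\le s$.

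Finally, the edges of $\Gamma_k$ partition into $2$ red-red edges ($\{0,k\}$ and $\{k,2k\}$), $3k-4$ red-blue edges (from the $3k$ degree-contributions at reds minus $4$ absorbed by the red-red edges), and the remaining $\tfrac{(3k-4)(k-1)}{2}$ blue-blue edges. Consequently,
\begin{equation*}
	|E(G)|=2a^{2}+(3k-4)\,ab+\tfrac{(3k-4)(k-1)}{2}\,b^{2},
\end{equation*}
and substituting the formulas for $a$ and $b$ recovers $g_k(n,s)$ after a routine expansion. The only step requiring genuine argument is the arc-covering claim above; everything else is straightforward bookkeeping.
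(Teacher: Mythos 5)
Your construction is the paper's, up to a relabeling: the paper places the three small ("red") classes at vertices $1,k,2k$ of $\Gamma_k$ while you place them at $0,k,2k$, but both triples induce a two-edge path in $\Gamma_k$ and the counts of red-red, red-blue and blue-blue edges, and hence the final algebra, are identical. Your verification of $\alpha(G)\le s$ and $e(G)=g_k(n,s)$ correctly fills in the detail that the paper compresses into "one can check," the only tacit step being the standard fact that every $k$-element independent set of $\Gamma_k$ is a consecutive length-$k$ arc, which your arc-covering argument implicitly relies on.
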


\begin{proof}
	Let $k\ge 2$ be the unique integer with 
	$s\in\bigl[\frac{k}{3k-1}n, \frac{k-1}{3k-4}n\bigr)$.
	Take a blow-up $G$ of $\Gamma_k$ obtained by replacing the vertices $1$, $k$, and $2k$
	by sets of size $(k-1)n-(3k-4)s$ and the remaining vertices by sets of size $3s-n$.
	Clearly, $G$ is triangle-free. One can check that $G$ has $n$ vertices, 
	independence number $s$, and $g_k(n, s)$ edges. 
\end{proof}

Let us observe that Conjecture~\ref{c:1} yields a precise prediction on the 
Ramsey-Tur\'an density function $f$, namely the following.

\begin{conj}\label{conj:2}
	The function $f\colon (0, 1]\longrightarrow \RR$ is given by
	\[
		f(\alpha)=
		\begin{cases}
			\alpha\,, & \text{ if $\alpha\le \frac13$} \cr
					f_k(\alpha)\,, & 
					\text{ if $\frac{k}{3k-1} \le \alpha < \frac{k-1}{3k-4}$ for some $k\ge 2$} \cr
			\frac 12, & \text{ if $\frac 12\le \alpha\le 1$},
		\end{cases}
	\]
	where 
	\begin{equation}\label{eq:1801}
		f_k(\alpha)=k(k-1)-2k(3k-4)\alpha+(3k-4)(3k-1)\alpha^2\,.
	\end{equation}
\end{conj}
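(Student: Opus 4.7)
The plan is to derive Conjecture~\ref{conj:2} from Conjecture~\ref{c:1} combined with Fact~\ref{fact:1744} and the previously established asymptotics of $f$ outside the interval $\bigl(\tfrac13, \tfrac12\bigr)$. Since
\[
  f(\alpha) = \lim_{n \to \infty} \frac{\ex}{\binom{n}{2}} \quad \text{with } s = \lfloor \alpha n\rfloor,
\]
the task reduces to pinning down the asymptotic value of $\ex$ on each sub-interval and evaluating the corresponding limit. For $\alpha \in \bigl(0, \tfrac13\bigr]$, Brandt's regular constructions~\cite{B10} combined with the trivial bound $\ex \le \tfrac12 ns$ already yield $f(\alpha) = \alpha$; for $\alpha \in \bigl[\tfrac12, 1\bigr]$, Mantel's theorem yields $f(\alpha) = \tfrac12$. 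Both agree with the proposed formula.

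Next, fix $\alpha \in \bigl(\tfrac13, \tfrac12\bigr)$ and let $k \ge 2$ be the unique integer with $\alpha \in \bigl[\tfrac{k}{3k-1}, \tfrac{k-1}{3k-4}\bigr)$. For all sufficiently large $n$, the value $s = \lfloor \alpha n \rfloor$ lies in $\bigl[\tfrac{k}{3k-1}n, \tfrac{k-1}{3k-4}n\bigr)$, so Conjecture~\ref{c:1} would give $\ex \le g_k(n, s)$, while Fact~\ref{fact:1744} delivers the matching lower bound $\ex \ge g_k(n, s)$. Hence $\ex = g_k(n, s)$ on this range. Dividing by $\binom{n}{2}$ and letting $n \to \infty$ turns the quadratic $g_k(n, s)$ into $f_k(\alpha)$ termwise, the floor function contributing only an $O(n)$ error absorbed by $\binom{n}{2} \sim n^2/2$.

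To glue the pieces together, I would verify continuity at the boundaries by a direct computation: $f_k\bigl(\tfrac{k}{3k-1}\bigr) = \tfrac{k}{3k-1}$, which matches both the trivial bound and~\eqref{eq:1803}, and $f_k\bigl(\tfrac{k-1}{3k-4}\bigr) = f_{k-1}\bigl(\tfrac{k-1}{3k-4}\bigr) = \tfrac{k-1}{3k-4}$, so the piecewise formula agrees at every junction between consecutive $f_k$'s. As $k \to \infty$ the left endpoints $\tfrac{k}{3k-1}$ tend to $\tfrac13$ and the values $f_k\bigl(\tfrac{k}{3k-1}\bigr)$ tend to $\tfrac13$ as well, aligning with Brandt's regime at $\alpha = \tfrac13$; similarly $f_2\bigl(\tfrac12\bigr) = \tfrac12$ matches Mantel at the other end. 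These checks simultaneously show that replacing $\lfloor \alpha n \rfloor$ by $\lceil \alpha n \rceil$ at an interval endpoint leaves the limit unchanged.

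The main obstacle is entirely upstream: proving Conjecture~\ref{c:1} itself, which is currently open precisely on the range $s \in \bigl(\tfrac13 n, \tfrac38 n\bigr)$. Establishing it would presumably require a full description of the extremal families $\Ex$ as blow-ups of Andr\'asfai graphs $\Gamma_k$, pushing the techniques behind Theorem~\ref{thm:2338} through for every $k \ge 4$. Given Conjecture~\ref{c:1}, however, the derivation of Conjecture~\ref{conj:2} sketched above is essentially arithmetic and a continuity check.
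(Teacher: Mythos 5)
The statement is a conjecture that the paper does not prove; the authors' entire ``argument'' for it is the one-line remark that Conjecture~\ref{c:1} yields it as a precise prediction for $f$, together with the observation that the proposed formula agrees with~\eqref{eq:1803} at the critical values $\alpha_k=\frac{k}{3k-1}$. Your proposal correctly spells out that very derivation — the matching bounds $\ex\le g_k(n,s)$ (from Conjecture~\ref{c:1}) and $\ex\ge g_k(n,s)$ (Fact~\ref{fact:1744}), the passage from $g_k(n,s)$ to $f_k(\alpha)$ by dividing by $\binom n2$, and the arithmetic continuity checks $f_k\bigl(\tfrac{k}{3k-1}\bigr)=\tfrac{k}{3k-1}$ and $f_k\bigl(\tfrac{k-1}{3k-4}\bigr)=f_{k-1}\bigl(\tfrac{k-1}{3k-4}\bigr)$ which glue the pieces and handle the rounding of $\lfloor\alpha n\rfloor$ at endpoints — and you are right to flag that the whole thing remains conditional on the open Conjecture~\ref{c:1}, which is exactly the paper's own position.
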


Notice that at the critical values $\alpha_k=\frac k{3k-1}$ this function 
agrees with~\eqref{eq:1803}. The remainder of this introduction discusses 
further evidence in support of Conjecture~\ref{c:1}. 

\subsection{Minimum degree}

There appears to be a mysterious analogy between the Ramsey-Tur\'an problem for 
triangles and the more thoroughly studied problem to describe the structure of
triangle-free graphs of large minimum degree. For instance, there is a similar 
transition from chaos to structure occurring at $\frac 13n$. As reported in~\cite{ES73} 
Hajnal constructed triangle-free graphs $G$ with 
$\delta(G)\ge \bigl(\frac 13-o(1)\bigr)|V(G)|$
of arbitrarily large chromatic number. On the other hand, \L uczak~\cite{Lu06} 
proved that for every $\eps>0$
all triangle-free graphs $G$ with $\delta(G)\ge \bigl(\frac13+\eps\bigr)|V(G)|$ 
admit a homomorphism into a triangle-free graph whose order can be bounded in terms 
of $\eps$. The ultimate variant of \L uczak's result is due to 
Brandt and Thomass\'e~\cite{BT}, who proved that, actually, such graphs either admit 
a homomorphism into some Andr\'asfai graph or into some 
so-called {\sl Vega graph} (see~\cite{BP}).

We will not introduce Vega graphs properly here and only remark that they can be 
obtained from Andr\'asfai graph by adding $8$ vertices forming a cube as well as several 
new edges, and possibly deleting at most two special vertices afterwards. 
For $k\ge 10$ with $k\equiv 1, 2, 3\pmod{9}$ there are blow-ups of appropriate 
Vega graphs with $3k-1$ vertices and independence number $k$ that furnish additional 
extremal cases for~\eqref{eq:1803}.      

Now several questions present themselves. Are Vega graphs so special that their 
only appearances in extremal families $\Ex$ are the aforementioned regular ones?
Or, perhaps, on the contrary, because of their more sophisticated structure they 
admit blow-ups falsifying Conjecture~\ref{c:1}? Both these speculations seem to be false. 
There are non-trivial blow-ups of Vega graphs which are in $\Ex$. Nonetheless, it can be 
proved that, in a sense made precise in~\cite{Vega}, no `natural' blow-up of an Andr\'asfai 
graph or Vega graph 
can be a counterexample to Conjecture~\ref{c:1}.

These facts have an interesting consequence. Together with the structure theorem of 
Brandt and Thomass\'e they allow us to prove Conjecture~\ref{c:1} for  
$\alpha\in \bigl[\frac k{3k-1}, \frac k{3k-1}+\eps_k\bigr]$, where $k\ge 2$ and $\eps_k$ 
is sufficiently small.
Further details and a conjectural explicit description of the extremal graph 
families $\Ex$ will be presented in our forthcoming article~\cite{Vega}. 

\bigskip

This article is organized as follows. In the next section we sketch some tools 
and observations which we shall use later and which, hopefully, could be useful 
in the quest of proving Conjecture~\ref{c:1} in full generality. The last section 
is devoted to the proof of Theorem~\ref{thm:2338}.   


\section{Preliminaries}

The goal of this section is to gather several results that we believe to be 
relevant in general to the problem of determining $\ex$ for $s>n/3$. 
These preliminaries fall naturally into three groups. 
We start in Subsection~\ref{subsec:MI} with some facts concerning matchings 
and independent sets in arbitrary, not necessarily triangle-free, graphs. 
Subsection~\ref{subsec:Zykov} proceeds with a discussion of {\it 
symmetrisation} operations -- a device we shall use for ``simplifying'' extremal
graphs. Finally in Subsection~\ref{subsec:AB} we use this technique for investigating
the structure of graphs that are extremal for the problem to determine~$\ex$.   
  
Throughout the article we follow standard graph theoretical notation. 
Thus, for instance,~$\deg_G(v)$ stands for the degree of a vertex $v$ of a graph $G$, 
and by $\Nn_G(S)$ we mean the neighbourhood of the set of vertices $S$. Moreover, 
we omit subscripts unless they are necessary to avoid confusion. Given two disjoint 
sets $A$ and $B$ we define 
\[
	K(A, B) = \bigl\{\{a, b\}\colon a\in A \text{ and } b\in B\bigr\}\,.
\]
This is the edge set of the complete bipartite graph with vertex partition $A\dcup B$.
For two disjoint subsets $A$ and $B$ of vertices of $G$ we say that {\it $A$ is 
matchable into~$B$} 
if the bipartite graph induced in $G$ by the sets $A$ and $B$ contains  
a matching saturating $A$. 

\subsection{Matchings and independent sets} \label{subsec:MI}

Clearly if $A$ and $Y$ are two disjoint independent sets in a graph $G$ with 
$|Y|\le |A|$, then there is an injective map from $Y$ to $A$. The following simple 
consequence of Hall's theorem, due to Andr\'asfai (see~\cite{A}*{Lemma~2.3}), ensures 
that in case $|A|=\alpha(G)$
one such injection is exemplified by a matching. For the reader's 
convenience we include a short proof.

\begin{fact}\label{f:matchable}
	Let $A$ and $Y$ be two disjoint independent sets in a graph $G$. If $|A|=\alpha(G)$, 
	then~$Y$ is matchable into $A$. 
\end{fact}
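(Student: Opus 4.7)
The plan is to deduce the statement from Hall's marriage theorem applied to the bipartite graph $H$ induced by $G$ between the parts $Y$ and $A$. By Hall's theorem, $Y$ is matchable into $A$ if and only if every subset $Y'\subseteq Y$ satisfies $|\Nn_G(Y')\cap A|\ge |Y'|$. So it suffices to verify this condition using only that $A$ and $Y$ are disjoint independent sets and that $|A|=\alpha(G)$.

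I would argue by contradiction. Assume there exists $Y'\subseteq Y$ with $|\Nn_G(Y')\cap A|<|Y'|$, and consider the set
\[
	A'=\bigl(A\setminus \Nn_G(Y')\bigr)\cup Y'\,.
\]
Since $A$ and $Y$ are disjoint, this is a disjoint union, hence
\[
	|A'|=|A|-|\Nn_G(Y')\cap A|+|Y'|>|A|=\alpha(G)\,.
\]

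The final step is to check that $A'$ is independent, which contradicts the maximality of~$A$. Indeed, $A\setminus \Nn_G(Y')$ is independent as a subset of $A$, $Y'$ is independent as a subset of $Y$, and by definition of $\Nn_G(Y')$ no vertex of $A\setminus \Nn_G(Y')$ is adjacent to any vertex of $Y'$. This contradiction establishes Hall's condition and therefore the desired matching exists. The argument is short and the only mild point to watch is that $\Nn_G(Y')$ may contain vertices outside~$A$, which is why we intersect with $A$ when bounding the neighbourhood inside the bipartite graph~$H$; there is no genuine obstacle.
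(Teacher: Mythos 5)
Your proof is correct and follows essentially the same approach as the paper: both verify Hall's condition by observing that for any $Y'\subseteq Y$, the set $\bigl(A\setminus \Nn_G(Y')\bigr)\cup Y'$ is independent and thus has size at most $\alpha(G)=|A|$. The paper phrases this directly while you argue by contradiction, but this is a purely cosmetic difference.
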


\begin{proof}
	In the light of Hall's theorem~\cite{Hall} it suffices to 
	prove that for an arbitrary $D\subseteq Y$ and its neighbourhood $A\cap \Nn(D)$ in $A$ 
	we have $|D|\le |A\cap \Nn(D)|$. Since $D\cup (A\setminus \Nn(D))$ is independent in $G$, 
	we have indeed 	
	\[
		|D| + |A| - |A\cap \Nn(D)| = |D\cup (A\setminus \Nn(D))| \le \alpha(G) = |A|\,. \qedhere
	\]
\end{proof}

In general, deleting edges from a graph may increase its independence number. 
For our purposes it will be important to know that the following type
of edge deletions leave the independence number invariant. 

\begin{lemma}\label{f:isolating}
Given a graph $G$, suppose
	\begin{enumerate}
		\item[$\bullet$]  that $A\subseteq V(G)$ is an independent set of size $\alpha(G)$, 
		\item[$\bullet$] and that $M$ is a matching in $G$ from $V(G)\setminus A$ to $A$, 
			the size of which is as large as possible. 
	\end{enumerate}
	If $G'$ denotes the graph obtained from $G$ by isolating the vertices in 
	$A\setminus V(M)$, i.e., by deleting all edges incident with them, then 
	$\alpha (G')=\alpha(G)$. 
\end{lemma}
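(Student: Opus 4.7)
Since $G'$ is a subgraph of $G$, the inequality $\alpha(G') \ge \alpha(G)$ is immediate. Set $A_M = A \cap V(M)$, so that $A = A_M \dcup U$. Let $I$ be any independent set in $G'$. As every vertex of $U$ is isolated in $G'$, I may assume $U \subseteq I$; put $I' = I \setminus U$. Since $G$ and $G'$ agree on all edges not incident to $U$, the set $I'$ is independent in $G$ as well. With $|I| = |U| + |I'|$ and $|A| = |A_M| + |U|$, the claim reduces to showing $|I'| \le |A_M|$.

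To tap into the maximality of $M$, I run the standard alternating BFS from $U$, alternating non-matching edges out of $A$ with matching edges back, and denote by $L \subseteq A$ and $R \subseteq V(G) \setminus A$ the vertices reached on each side. Because $M$ is maximum there is no augmenting path, which forces $R \subseteq V(M)$ and makes $M$ restrict to a bijection between $R$ and $L \cap A_M$; write $m_M(r) \in L \cap A_M$ for the $M$-partner of $r \in R$. The key K\"onig-type output is that $G$ contains no edge between $L$ and $(V(G) \setminus A) \setminus R$.

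Now I perform a swap. Partition $I' = I'_A \dcup I'_{V,R} \dcup I'_{V,X}$ according to whether a vertex lies in $A_M$, in $R$, or in $(V(G) \setminus A) \setminus R$, noting $I'_A \subseteq A_M$ because $I' \cap U = \varnothing$, and set
\[
    I^{\star} = I'_A \cup I'_{V,X} \cup \{\, m_M(r) : r \in I'_{V,R}\,\}\,.
\]
The three pieces are pairwise disjoint, for if some $m_M(r)$ lay in $I'_A$ then $r$ and its $M$-partner would both lie in the independent set $I'$, contradicting $\{r,m_M(r)\} \in M$; hence $|I^{\star}| = |I'|$. I then check that $I^{\star} \cup U$ is independent in $G$: the portion $I'_A \cup U \cup m_M(I'_{V,R}) \subseteq A$ is independent, $I'_{V,X}$ is independent, and the possible cross-edges between $I'_{V,X} \subseteq (V(G) \setminus A) \setminus R$ and $U \cup m_M(I'_{V,R}) \subseteq L$ are excluded by the K\"onig property. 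Since $I^{\star}\cap U = \varnothing$, this yields $|I'|+|U| = |I^{\star}|+|U| \le \alpha(G) = |A_M|+|U|$ and hence $|I'|\le|A_M|$. The main obstacle I anticipate is extracting the right consequence of the maximality of $M$: a naive attempt that merely adds $U \setminus \Nn_G(I')$ to $I'$ only gives the too-weak bound $|I'| \le |A_M| + |\Nn_G(I') \cap U|$, and the alternating-path swap above is precisely the device that absorbs the `bad' neighbours of $U$ in $I'_V$ into harmless vertices of $A_M$.
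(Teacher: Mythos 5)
Your proof is correct, and it takes a genuinely different route from the paper's.

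The paper proceeds in the opposite direction: given an independent set $U$ of $G'$, it invokes Fact~\ref{f:matchable} (a Hall-type statement) to produce a matching $N$ from $U\setminus A$ into $A$, splices $N$ together with the part of $M$ incident to $A\cap U$ to form a new matching $M'$, and then derives $|U|\le|A|$ from $|M'|\le|M|$. You instead go via the K\"onig/alternating-path structure: from $A\setminus V(M)$ you build the reachability sets $L,R$, use the absence of augmenting paths to show $R\subseteq V(M)$ and to establish the no-edge property between $L$ and $(V(G)\setminus A)\setminus R$, and then you explicitly transplant each vertex of $I'$ lying in $R$ to its $M$-partner in $L\cap A_M$. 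This produces a bona fide independent set $I^\star\cup U$ of $G$ of size $|I|$, which is conceptually pleasant: it exhibits, for every independent set of $G'$, an equally large independent set of $G$, rather than inferring the size bound indirectly through a matching count. The two approaches are essentially dual (matching construction vs.\ independent-set construction), both ultimately encoding Hall/K\"onig, but the paper's version is shorter because Fact~\ref{f:matchable} is already on hand, while yours is more self-contained at the cost of a somewhat heavier case analysis.

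One tiny point to tighten: when you check that $I^\star\cup U$ is independent, you verify the $A$-portion, the set $I'_{V,X}$ itself, and the cross-edges between $I'_{V,X}$ and $U\cup m_M(I'_{V,R})$, but you do not explicitly address cross-edges between $I'_{V,X}$ and $I'_A$. These are of course excluded because $I'_A\cup I'_{V,X}\subseteq I'$ and $I'$ is independent in $G$, so nothing is broken, but the sentence as written leaves that pair unmentioned.
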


\begin{proof}
	Since $G'$ is a subgraph of $G$, we have  $\alpha (G')\ge \alpha(G)$. 
	For the converse direction we consider any set $U\subseteq V(G)$ which is 
	independent in $G'$. 
	Since $U\setminus A$ is independent in $G$, Fact \ref{f:matchable} tells us that 
	in $G$ there exists a matching $N$ from $U\setminus A$ to $A$ covering all vertices 
	of $U\setminus A$. We contend that 
	\[
		M' = \{e\in M\colon A\cap U\cap e\neq \varnothing\} \cup N
	\]
	is a matching in $G$. Otherwise, there had to exist two edges, $e\in M$ with 
	$A\cap U\cap e\neq \varnothing$ and $f\in N$, sharing a vertex $x$. By $x\in f\in N$ 
	we have either $x\in U\sm A$ or $x\in A$. 
	In the former case the vertex of $e$ distinct from $x$ needs to be in $A\cap U$. 
	In particular, both ends of $e$ are in $U$, contrary to $e\in M \subseteq E(G')$ 
	and $U$ being independent in $G'$.
	
	So we are left with the case $x\in A$. Now we have in fact $A\cap U \cap e = \{x\}$ 
	and both ends
	of $f$ are in $U$. Moreover, $x\in A\cap U\cap e\subseteq A\cap V(M)$ shows that $f$ does 
	not get deleted when we pass from $G$ to $G'$. This contradiction to $U$ being
	independent in $G'$ proves that~$M'$ is indeed a matching in $G$.  
		
	Therefore the maximality of $M$ yields
	\[
		|N| + |V(M)\cap A\cap U|=|M'| \le |M| 
		= |V(M)\cap A\cap U| + |V(M) \cap (A\setminus U)|\,,
	\]
	i.e.,
		\[
		|U\setminus A|=|N| \le |V(M)\cap (A\setminus U)|\,.
		\]
	Thus 
		\[
			|U\setminus A|\le |A\setminus U|\,,
		\]
and, consequently,  $|U|\le |A|=\alpha(G)$, as desired. 
\end{proof}

To unleash the full power of the foregoing lemma it is useful to know 
that certain subsets of $V(G)$ can be forced to be
subsets of $V(M)$. For such purposes we shall employ the following observation. 
  
\begin{fact}\label{f:bipartite}
	Let $H$ be a bipartite graph with vertex classes $R$ and $S$ in which the largest 
	matching has size $m$. If some set $R'\subseteq R$ is matchable into $S$, 
	then $H$ contains a matching of size $m$ saturating all vertices of $R'$.
\end{fact}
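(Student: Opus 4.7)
The plan is to carry out an alternating-path exchange argument. Let $M$ be any maximum matching of $H$ (necessarily of size $m$), and let $N$ be a matching of $H$ saturating every vertex of $R'$, which exists by hypothesis. I would argue by induction on the number $k=|R'\setminus V(M)|$ of vertices of $R'$ still uncovered by $M$, maintaining $|M|=m$ throughout. The base case $k=0$ is immediate, so suppose $k\ge 1$ and fix some $r\in R'\setminus V(M)$.

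Next I would inspect the component of the symmetric difference $M\triangle N$ containing $r$. Because $M$ and $N$ are matchings, every vertex has degree at most $2$ in $M\triangle N$, so the components are paths and even cycles; and because $r$ is saturated by $N$ but not by $M$, its degree in $M\triangle N$ equals $1$. Consequently, this component is a path $P$ having $r$ as one of its endpoints and beginning with an $N$-edge. Were the other endpoint of $P$ saturated only by $N$, then $P$ would be an $M$-augmenting path and $M\triangle E(P)$ a matching of size $m+1$, contradicting the maximality of $M$. So the other endpoint $r'$ of $P$ is saturated only by $M$; in particular, the last edge of $P$ lies in $M$. Tracking the bipartite alternation $R,S,R,S,\ldots$ from $r\in R$ down the path, this forces $r'\in R$, and since $r'\notin V(N)$ we obtain $r'\notin R'$.

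Setting $M':=M\triangle E(P)$ yields a matching with the same number of edges as $M$ (the $N$-edges and $M$-edges of $P$ alternate and are equal in number), hence still of size $m$. Every interior vertex of $P$ remains saturated by $M'$ via the other incident edge of the component, $r$ newly becomes saturated, and $r'$ loses its $M$-partner. Because $r\in R'$ and $r'\notin R'$, the uncovered count in $R'$ drops by exactly one: $|R'\setminus V(M')|=k-1$. Applying the inductive hypothesis to $M'$ now produces the desired matching of size $m$ saturating $R'$.

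The only delicate step is the endpoint analysis of $P$, where maximality of $M$ rules out the augmenting alternative and the bipartite parity then places $r'$ on the $R$-side, outside of $R'$. Everything else is mechanical, and I do not anticipate any serious obstacle.
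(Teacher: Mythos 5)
Your argument is correct, but it takes a route that differs from the paper's in spirit. You run the classical alternating-path machinery: you form the symmetric difference $M\triangle N$, isolate the path component $P$ at an uncovered vertex $r\in R'$, observe that maximality of $M$ forbids an augmenting path and bipartite parity pushes the other endpoint $r'$ onto the $R$-side outside $R'$, and then flip $P$ to reduce the deficit $|R'\setminus V(M)|$ by one, closing with an induction. The paper instead applies an extremal principle: among all maximum matchings $M$, it picks one maximizing $|M\cap N|$, and then a single local edge swap (replace $uz\in M$ by $xz\in N$) gives an immediate contradiction, so no induction and no path-tracing are needed. The two are closely related — flipping your alternating path $P$ strictly increases $|M\cap N|$, which is exactly the quantity the paper's extremal choice controls — but the paper's version compresses the iteration into one optimization step and avoids analysing components of $M\triangle N$ or worrying about parity. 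Your write-up buys a very standard, mechanical argument at the cost of a slightly longer exposition; the paper's buys brevity at the cost of needing the slightly less obvious "maximize the overlap" idea up front. Both are valid; there is no gap in yours.
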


\begin{proof}
	Let $\ccM$ be the set of all matchings in $H$ having size $m$. 
	Choose first a matching~$N$ from $R'$ into $S$ saturating all vertices in $R'$, 
	and then a matching 
	$M\in \ccM$ for which $|M\cap N|$ is maximal. We will show that $M$ 
	covers $R'$. Otherwise there was a vertex $x\in R'$ not covered by $M$. 
	Let $z$ be the unique vertex with $xz\in N$. By the maximality of $m$, 
	the $m+1$ edges in $M\cup \{xz\}$ cannot form a matching in $H$
	and, consequently, we have $z\in V(M)$. So there is a vertex~$u$ such 
	that $uz\in M$. But now $M' = (M\setminus \{uz\})\cup \{xz\}$
	is a matching in $\ccM$ satisfying $|M'\cap N|>|M\cap N|$,  contradicting 
	our choice of $M$.
\end{proof}

\subsection{
	Symmetrisation} \label{subsec:Zykov}

One of the standard proofs of Tur\'an's theorem~\cite{T}, due to 
Zykov~\cite{Zy}, shows that every $K_\ell$-free graph $G$ can be 
transformed into an $(\ell-1)$-partite graph
on the same vertex set 
by a sequence of symmetrisation operations,  in such a way that throughout 
the whole process the number of edges never decreases. Indeed, this implies that $G$ does 
have at most as many edges as the corresponding Tur\'an graph. The  
{\it symmetrisation} step employed  by Zykov consists in taking two non-adjacent 
vertices $u$ and~$v$, deleting all edges incident with $u$, and then adding all edges 
from $u$ to the neighbours of~$v$. 

Our proof of Theorem~\ref{thm:2338} utilises a modest generalisation of this idea.
Given a graph~$G$  
and two disjoint sets $A,B\subseteq V(G)$, we say that 
a graph~$G'$ on the same vertex set as~$G$ arises from~$G$ by the {\it generalised Zykov 
symmetrisation} $\GZS(A, B)$ if it is obtained by deleting all edges incident with $B$
and afterwards adding all edges from $A$ to~$B$. Explicitly, this means 
\[
	 V(G')=V(G) 
	 \quad \text{ and } \quad 
	 E(G') = \bigl(E(G)\setminus \{e\in E(G): e\cap B\neq\emptyset\}\bigr)\cup K(A,B)\,.
\]
We will express this state of affairs by writing $G'=\GZS(G \,|\, A, B)$.
In the special case where $B=\{v\}$ is a singleton, we will often
abbreviate $\{v\}$ to $v$, thus speaking, e.g. of the operation $\GZS(A, v)$. 
For later use we record the following obvious properties of these operations.

\begin{fact}\label{f:NRO}
	Given a graph $G$ and two disjoint sets $A,B\subseteq V(G)$, let $G'=\GZS(G \,|\, A, B)$.
	\begin{enumerate}[label=\rmlabel]
		\item\label{it:11} If $|A|\ge \deg_G(b)$ holds for all $b\in B$, then $e(G') \ge e(G)$.
		If equality occurs, then all vertices in $B$ have degree $|A|$ in $G$
		and $B$ is an independent set in $G$. 
		\item\label{it:22} If $A$ is independent and $G$ is triangle-free, then so is $G'$.
	\end{enumerate}
\end{fact}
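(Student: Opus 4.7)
The plan is to prove both parts by direct edge accounting and a short case analysis.

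For part \mref{it:11}, I would begin by expressing $e(G')-e(G)$ through the defining formula of the operation. The added edges are precisely $K(A,B)$, contributing $|A||B|$ new edges. The deleted edges are exactly those incident to $B$; a standard degree-sum argument counts these as $\sum_{b\in B}\deg_G(b)-e(G[B])$, since edges with both ends in $B$ are counted twice in the degree sum. Hence
\[
    e(G')-e(G)=|A||B|-\sum_{b\in B}\deg_G(b)+e(G[B]).
\]
The hypothesis $|A|\ge \deg_G(b)$ for every $b\in B$ gives $|A||B|\ge\sum_{b\in B}\deg_G(b)$, and $e(G[B])\ge 0$, so $e(G')\ge e(G)$. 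For the equality clause, if $e(G')=e(G)$ then both inequalities collapse: $\deg_G(b)=|A|$ for every $b\in B$ and $e(G[B])=0$, i.e.\ $B$ is independent in $G$. This is purely mechanical; no obstacle is expected.

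For part \mref{it:22}, I would argue by contradiction, assuming $T=\{x,y,z\}$ is a triangle in $G'$. Since $E(G')\setminus K(A,B)\subseteq E(G)$ and $G$ is triangle-free, $T$ must use at least one edge of $K(A,B)$, and in particular $T\cap B\ne\varnothing$. Pick $b\in T\cap B$. By construction, the only edges incident to $b$ in $G'$ lie in $K(A,B)$, so $N_{G'}(b)\subseteq A$. The other two vertices of $T$ are neighbours of $b$ in $G'$, so they both lie in $A$; but then the edge joining them is an edge of $G'$ inside $A$. Since $A\cap B=\varnothing$ and no edge of $K(A,B)$ lies inside $A$, this edge must come from $E(G)$, contradicting the assumption that $A$ is independent in $G$. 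Hence no such triangle exists, and $G'$ is triangle-free.

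Neither part poses a real obstacle; both are essentially immediate from the definition of $\GZS(A,B)$. The only mildly subtle point, and the one I would write out carefully, is the double-counting in the degree sum for the deleted edges in part \mref{it:11}, because this is what makes the equality analysis clean.
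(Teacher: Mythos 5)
Your proof is correct and follows essentially the same approach as the paper: for part \mref{it:11} both arguments rest on the same degree-sum comparison with the double-counting of internal edges of $B$ (the paper phrases it as an inequality $e(G')-e(G)\ge\sum_{b\in B}(|A|-\deg_G(b))$ while you write out the exact identity with $e(G[B])$, but the content is identical), and for part \mref{it:22} both assume a triangle, locate a vertex in $B$, observe its $G'$-neighbours lie in $A$, and contradict the independence of $A$.
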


\begin{proof}
	Part~\ref{it:11} follows from the estimate
	\[
		e(G') - e(G)
		\ge 
		\sum_{b\in B} \bigl(|A| -\deg_G(b)\bigr) 
		\ge
		0\,,
	\]
	where the first ``$\ge$'' sign takes into account that edges both of whose 
	ends are in $B$ are subtracted twice in the sum over $b\in B$. The statement 
	addressing the equality case should now be clear.
	
	To prove part~\ref{it:22} we assume 
	for the sake of contradiction that $xyz$ was a triangle in~$G'$. Owing to 
	$G-B=G'-B$ we may suppose further that $x\in B$. Now $y$ and $z$ are neighbours 
	of $x$ in~$G'$ and, hence, both of them are in~$A$. But $A$ is still independent 
	in $G'$ and thus~$yz$ cannot be an edge of $G'$.
\end{proof}

Our next result describes a case where symmetrization preserves the independence number. 

\begin{lemma}\label{f:NROs}
	Let $A$ and $B$ be two disjoint independent sets in a graph $G$ such 
	that $|A|=|B|=\alpha(G)$. If $M$ is a matching from $V(G)\setminus (A\cup B)$ 
	to $B$, whose size is as large as possible, $B'\subseteq B\setminus V(M)$, 
	and $G'=\GZS(G\,|\, A, B')$, then $\alpha(G')=\alpha(G)$. 
\end{lemma}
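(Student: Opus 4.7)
The plan is to prove the nontrivial direction $\alpha(G')\le\alpha(G)$; the reverse bound is immediate since $A$ remains independent in $G'$ and has size $\alpha(G)$. Given any independent set $U$ in $G'$, the fact that $G'$ contains every edge between $A$ and $B'$ forces $U\cap A=\varnothing$ or $U\cap B'=\varnothing$. In the second case $U\subseteq V(G)\setminus B'$ is independent in $G$ as well (the graphs agree off $B'$), and $|U|\le\alpha(G)$. The whole difficulty therefore lies in the case $U\cap A=\varnothing$, which I would reduce to the key claim
\[
	\alpha\bigl(G[V(G)\setminus(A\cup B')]\bigr)\le\alpha(G)-|B'|\,.
\]
Granting this, the set $W=U\setminus B'\subseteq V(G)\setminus(A\cup B')$ is independent in~$G$, so the claim yields $|W|\le\alpha(G)-|B'|$, and adding $|U\cap B'|\le|B'|$ gives $|U|\le\alpha(G)$.

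To prove the claim I would exploit the maximality of~$M$ through K\"onig--Egerv\'ary alternating reachability inside the bipartite graph $H=G[V(G)\setminus(A\cup B),B]$. Let $Z$ be the set of vertices of $H$ reachable from $B'$ by $M$-alternating paths starting with a non-$M$-edge, and split it as $Z_X=Z\cap(V(G)\setminus(A\cup B))$ and $Z_B=Z\cap B$. Maximality of~$M$ forces $Z_X\subseteq V(M)$ (otherwise an augmenting path would arise), and by construction $M$ restricts to a bijection between $Z_X$ and $Z_B\setminus B'$; hence $|Z_B|=|Z_X|+|B'|$ and $\Nn_H(Z_B)\subseteq Z_X$. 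For any independent set $I$ of $G[V(G)\setminus(A\cup B')]$, the natural candidate
\[
	I^\star=(I\setminus Z_X)\cup Z_B
\]
will then be independent in~$G$: edges within $I\setminus Z_X$ are inherited from $I$, edges inside $Z_B\subseteq B$ are forbidden, and any potential edge from $(I\setminus Z_X)\cap(V(G)\setminus(A\cup B))$ to $Z_B$ would be an $H$-edge into $Z_B$ from outside $Z_X$, contradicting $\Nn_H(Z_B)\subseteq Z_X$.

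The main obstacle is then the cardinality bookkeeping: to conclude $|I^\star|\ge|I|+|B'|$ I need $|I\cap Z|\le|Z_X|$, which is not a priori obvious since $I$ could share many vertices with $Z_B$. Here the independence of $I$ saves the day a second time: for every $y\in I\cap Z_B$, the vertex $y$ lies in $Z_B\setminus B'$ (because $I\cap B'=\varnothing$) and therefore has an $M$-partner $M(y)\in Z_X$; were $M(y)$ itself in $I$, the edge $yM(y)\in M$ would contradict the independence of $I$. So $y\mapsto M(y)$ embeds $I\cap Z_B$ into $Z_X\setminus I$, giving $|I\cap Z_B|\le|Z_X|-|I\cap Z_X|$ and hence $|I\cap Z|\le|Z_X|$. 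Combined with $|Z_B|=|Z_X|+|B'|$ this yields $|I^\star|\ge|I|+|B'|$, so independence of $I^\star$ in~$G$ forces $|I|+|B'|\le\alpha(G)$, which is the claim.
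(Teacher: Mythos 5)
Your proof is correct, but it takes a genuinely different route from the paper. The paper disposes of the nontrivial case $U\cap A=\varnothing$ in a single line by citing Lemma~\ref{f:isolating} (applied to $G-A$, $B$, $M$): that lemma states that isolating the vertices of $B\setminus V(M)$ in $G-A$ leaves the independence number unchanged, which immediately bounds $|U|$. Lemma~\ref{f:isolating} is itself proved via Fact~\ref{f:matchable} (Hall's theorem) and a counting argument comparing $|M'|$ with $|M|$. You instead reprove the needed inequality from first principles as a ``deletion'' statement, $\alpha\bigl(G[V(G)\setminus(A\cup B')]\bigr)\le\alpha(G)-|B'|$, which is equivalent to what the paper extracts from Lemma~\ref{f:isolating} (if a vertex set is isolated, any maximum independent set may be assumed to contain it, so isolation and deletion shift $\alpha$ by exactly the size of that set). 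Your mechanism is the K\"onig--Egerv\'ary alternating-reachability set $Z$ and the swap $I\mapsto(I\setminus Z_X)\cup Z_B$; the inclusion $\Nn_H(Z_B)\subseteq Z_X$, the injection $y\mapsto M^{-1}(y)$ from $I\cap Z_B$ into $Z_X\setminus I$, and the cardinality bookkeeping $|I^\star|=|I|+|Z_B|-|I\cap Z|\ge|I|+|B'|$ are all correct. What the paper's route buys is brevity and modularity, since Lemma~\ref{f:isolating} is stated and proved once and reused; what yours buys is a self-contained argument that makes the alternating-path structure behind the isolation lemma explicit, at the cost of length.
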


\begin{proof}
	Since  $A$ is independent in $G'$, we have $\alpha(G')\ge\alpha(G)$. 
	Now suppose conversely that $U\subseteq V(G)$ is independent in $G'$. 
	We are to prove $|U|\le \alpha(G)$. 
	
	If $A$ and $U$ are disjoint it follows from Lemma~\ref{f:isolating}  (applied 
	to $G-A$, $B$, and $M$ here in place of $G$, $A$, and $M$ 
	there) that $|U|\le \al(G-A)= \alpha(G)$, meaning that we are done. 
 
	It remains to consider the case~$U\cap A\neq \emptyset$. 
	Now $K(A, B')\subseteq E(G')$ implies that~$U$ is disjoint 
	to $B'$. Therefore $U$ is independent in $G$ and so $|U|\le \al(G)$. 
\end{proof}	

\subsection{Some general results} \label{subsec:AB}

Recall that for $n \ge s \ge 0$ we are interested in the quantity
\begin{align*}
		\ex=\max\bigl\{|E|\colon G=(V, E) 
			\text{ is a triangle-free graph with $|V|=n$ and 
			$\al(G)\le s$}\bigr\}
\end{align*}
and that
\[
	\Ex
	=
	\bigl\{G=(V,E)\colon K_3\nsubseteq G,\;|V|=n, \;\al(G)\le s,\textrm{ and } |E|=\ex\bigr\}
\]
denotes the corresponding family of extremal graphs. 

We begin by observing that the estimate $\ex\le g_k(n, s)$
holds whenever $s$ is outside the range required by Conjecture~\ref{c:1}.

\begin{fact}\label{f:kn<s<(k-1)n}
	Let integers $n\ge s \ge 0$ and $k\ge 2$ be given. 
	If $s\notin \bigl(\frac{k}{3k-1}n, \frac{k-1}{3k-4}n\bigr)$, then 
	\[
		\ex\le g_k(n, s)
	\]
	and equality can only hold if $s\in\bigl\{\frac{k}{3k-1}n, \frac{k-1}{3k-4}n\bigr\}$.
\end{fact}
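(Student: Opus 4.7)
The plan is to combine the trivial bound $\ex\le\tfrac12 ns$ stated in the introduction with a simple factorisation of $g_k(n,s)-\tfrac12 ns$ as a quadratic polynomial in $s$.

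The first step is a direct algebraic verification of the identity
\[
  g_k(n,s) - \tfrac{1}{2}ns
  =
  \tfrac{(3k-4)(3k-1)}{2}\Bigl(s - \tfrac{kn}{3k-1}\Bigr)\Bigl(s - \tfrac{(k-1)n}{3k-4}\Bigr).
\]
This factorisation is no accident: the construction from Fact~\ref{fact:1744} shows that at both values $s=\tfrac{kn}{3k-1}$ (balanced blow-up of $\Gamma_k$) and $s=\tfrac{(k-1)n}{3k-4}$ (the three exceptional classes collapse and one is left with a balanced blow-up of $\Gamma_{k-1}$) the resulting graph is $s$-regular. Its edge count therefore attains the trivial bound $\tfrac12 ns$, so these two endpoints must be the roots of the displayed quadratic, and a brief computation of sum and product of roots confirms this.

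With the identity in hand, the hypothesis $s\notin\bigl(\tfrac{k}{3k-1}n,\tfrac{k-1}{3k-4}n\bigr)$ places $s$ outside the open interval between the two roots. Using $\tfrac{kn}{3k-1}<\tfrac{(k-1)n}{3k-4}$ (immediate from $k(3k-4)<(k-1)(3k-1)$), the two linear factors then share a common sign and $g_k(n,s)\ge\tfrac12 ns$ follows. Combining with the trivial bound $\ex\le\tfrac12 ns$ yields $\ex\le g_k(n,s)$. For the equality statement, equality $\ex=g_k(n,s)$ forces both sandwiched inequalities to be tight; tightness in $\tfrac12 ns=g_k(n,s)$ occurs exactly when $s$ is a root of the factored quadratic, i.e.\ when $s\in\bigl\{\tfrac{k}{3k-1}n,\tfrac{k-1}{3k-4}n\bigr\}$, as asserted. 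The argument involves no real obstacle beyond executing the one-line factorisation.
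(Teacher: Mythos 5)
Your proof is correct and is essentially the same as the paper's: both compare the trivial bound $\ex\le\tfrac12 ns$ against $g_k(n,s)$ via the factorisation $2g_k(n,s)-ns=\bigl(kn-(3k-1)s\bigr)\bigl((k-1)n-(3k-4)s\bigr)$, which is exactly your displayed identity once the constants $(3k-1)$ and $(3k-4)$ are pulled back inside the linear factors. The equality analysis is likewise identical, so there is nothing further to add.
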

\begin{proof}
	We check that under our assumption on $s$ the trivial 
	upper bound $\ex\le \frac12 ns$ is at least as good as $g_k(n, s)$. Since
	for  $s\notin \bigl(\frac{k}{3k-1}n, \frac{k-1}{3k-4}n\bigr)$,
	\begin{align*}
		ns & \le ns   + \bigl(kn-(3k-1)s\bigr)\bigl((k-1)n-(3k-4)s\bigr) \\
		 & =k(k-1)n^2-2k(3k-4)ns+(3k-4)(3k-1)s^2 
		 =2g_k(n, s)
	\end{align*}
	this is indeed the case and the statement about the equality case also easily follows.
\end{proof}

In combination with Fact~\ref{fact:1744} this leads to the following alternative 
way of defining~$g(n, s)$ in case $\frac sn\in \bigl(\frac 13, \frac12\bigr)$.

\begin{cor}
	If $\frac 13n < s < \frac 12n$, then 
	\begin{align*}\tag*{$\Box$}
		\hfill g(n, s)=\min\bigl\{g_k(n, s)\colon k\ge 2\bigr\}\,.   
	\end{align*}
\end{cor}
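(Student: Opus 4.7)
My plan is to combine the construction of Fact~\ref{fact:1744} with Fact~\ref{f:kn<s<(k-1)n} and the trivial triangle-free bound $\ex\le\tfrac12 ns$. First I would locate the unique integer $k\ge 2$ for which $s/n\in [\alpha_k, \alpha_{k-1})$, where $\alpha_k=\frac{k}{3k-1}$; by Definition~\ref{dfn:1707} this yields $g(n,s)=g_k(n,s)$, so it is enough to verify $g_k(n,s)\le g_j(n,s)$ for every $j\ge 2$.

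For the upper bound $g_k(n,s)\le \tfrac12 ns$, I would invoke Fact~\ref{fact:1744}, which supplies a triangle-free graph on $n$ vertices with independence number $s$ and $g_k(n,s)$ edges. Since in any triangle-free graph the neighbourhood of each vertex is independent, we have $\Delta(G)\le \alpha(G)$, and hence $g_k(n,s)\le \ex \le \tfrac12 ns$.

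For the lower bound $\tfrac12 ns \le g_j(n,s)$ with $j\ne k$, I would first observe that the strict monotonicity of $\alpha_k=\frac{k}{3k-1}$ in $k$ makes the open intervals $(\alpha_j, \alpha_{j-1})$ pairwise disjoint; consequently $s/n\in [\alpha_k, \alpha_{k-1})$ forces $s/n\notin (\alpha_j, \alpha_{j-1})$ for every $j\ne k$. The hypothesis of Fact~\ref{f:kn<s<(k-1)n} is then satisfied with $j$ in place of $k$, and inspecting its proof one reads off $\tfrac12 ns\le g_j(n,s)$ directly from the displayed inequality. Concatenating the two estimates gives $g_k(n,s)\le \tfrac12 ns \le g_j(n,s)$ for all $j\ge 2$, as required.

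The argument is essentially bookkeeping and I do not foresee a real obstacle; the only point that needs a moment of care is the interval disjointness, which is entirely elementary, and the fact that when $s/n=\alpha_k$ all three quantities $g_k(n,s)$, $\tfrac12 ns$, and $g_{k+1}(n,s)$ coincide, which is consistent with the asserted minimum being attained by two consecutive indices at the common endpoint.
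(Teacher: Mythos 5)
Your proposal is correct and follows essentially the same route as the paper, which obtains the corollary simply by combining Fact~\ref{fact:1744} with Fact~\ref{f:kn<s<(k-1)n} via the chain $g_k(n,s)\le\ex\le\tfrac12 ns\le g_j(n,s)$ for the unique $k$ with $s/n\in[\alpha_k,\alpha_{k-1})$ and every $j\neq k$. Your elaboration of the interval disjointness and the endpoint consistency check are accurate and just spell out the bookkeeping the paper leaves implicit.
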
 

Our first structural result on graphs in $\Ex$
asserts that they contain two disjoint independent sets of size $s$ 
(provided there is enough space for them).

\begin{lemma}\label{f:ABs}
	If $n$ and $s$ are two integers with $n\ge 2s \ge 0$, then every 
	graph $G\in \Ex$ contains two disjoint independent sets of size $s$.  
\end{lemma}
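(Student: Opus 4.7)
My plan is to combine a matching argument (Lemma~\ref{f:isolating}) with Zykov-type symmetrisation (Fact~\ref{f:NRO}), exploiting extremality to control the structure of $G$. Setting $t := \alpha(G) \le s$, I first exhibit two disjoint independent sets of size $t$ in $G$, and then argue $t = s$.

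Observe first that an extremal $G$ has no isolated vertices, since any isolated vertex $v$ could be joined by a new edge to any other vertex without creating a triangle (as $v$ has no neighbours) while preserving $\alpha \le s$, contradicting extremality. Now fix a maximum independent set $A$ of size $t$ and a largest matching $M$ from $V(G) \setminus A$ to $A$. Lemma~\ref{f:isolating} ensures that isolating $A \setminus V(M)$ preserves $\alpha$, so by extremality these vertices must already be isolated -- but by the preceding observation, $A \setminus V(M) = \emptyset$. Thus $|M| = t$ and the $M$-partners form a set $B \subseteq V(G) \setminus A$ of size $t$ disjoint from $A$. If $B$ is independent, $(A, B)$ is the desired pair (of size $t$). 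Otherwise, pick an edge $b_1 b_2 \in E(G[B])$; since $N(b_i)$ is independent, $\deg_G(b_i) \le t$. I claim $\deg_G(b_1) = \deg_G(b_2) = t$. Otherwise, say $\deg_G(b_1) < t$; consider $G' := \GZS(G \,|\, A, b_1)$, which is triangle-free (Fact~\ref{f:NRO}\ref{it:22}) and satisfies $e(G') > e(G)$ (strict case of Fact~\ref{f:NRO}\ref{it:11}). Analysing independent sets of $G'$ -- using $N_{G'}(b_1) = A$, so any such set either avoids $b_1$ (giving size $\le \alpha(G - b_1) \le t$) or contains $b_1$ and thereby avoids $A$ (giving size $\le 1 + \alpha(G[V(G) \setminus A \setminus \{b_1\}]) \le 1 + t$) -- gives $\alpha(G') \le t + 1$. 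For $t < s$ this yields $\alpha(G') \le s$, contradicting extremality; for $t = s$ either the same contradiction holds or an independent set of $G'$ of size $s + 1$ decomposes as $\{b_1\} \cup J$ with $J \subseteq V(G) \setminus A \setminus \{b_1\}$ independent in $G$ of size $s$, delivering the desired pair with $A$. Hence $\deg_G(b_1) = \deg_G(b_2) = t$, and triangle-freeness together with $b_1 b_2 \in E(G)$ forces $N(b_1) \cap N(b_2) = \emptyset$, so $N(b_1)$ and $N(b_2)$ are disjoint independent sets of size $t$.

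It remains to show $t = s$. If $t < s$, the analysis above forces every $b \in V(G) \setminus A$ to satisfy $\deg_G(b) = t$. Iterating the symmetrisation $\GZS(\cdot \,|\, A, b)$ across all $b \in V(G) \setminus A$ -- each step preserving triangle-freeness (Fact~\ref{f:NRO}\ref{it:22}) and the edge count (equality case of Fact~\ref{f:NRO}\ref{it:11}) -- eventually transforms $G$ into the complete bipartite graph $K_{t, n-t}$. But $\alpha(K_{t, n-t}) = n - t \ge n - s \ge s > t$ violates the bound $\alpha \le s$ that the iteration should preserve. Accordingly the iteration must halt earlier, which happens precisely when a fresh independent set of size $s$ emerges inside $V(G) \setminus A$; descending it via the symmetrisations furnishes the required second independent $s$-set disjoint from $A$. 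The principal obstacle is this final iterative argument: one must carefully track the evolution of $\alpha$ through the symmetrisations and ensure that the emergent independent set descends faithfully to the original graph $G$.
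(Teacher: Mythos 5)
Your proposal has two genuine gaps. The first occurs where you argue that, by extremality, the vertices in $A \setminus V(M)$ ``must already be isolated'' and hence $A \setminus V(M) = \emptyset$. Lemma~\ref{f:isolating} only tells us that isolating those vertices preserves the independence number; isolating means \emph{deleting} edges, so the resulting graph is a subgraph of $G$ with at most as many edges. Finding such a subgraph with fewer edges does not contradict $G$ being edge-maximal -- you would need an additional argument that the lost incidences can be traded for at least as many new edges elsewhere, and none is given. Consequently the key claim $|M| = t$, and with it the construction of the size-$t$ set $B$ of $M$-partners, are unsupported. The second gap is the one you flag yourself: the iterative symmetrisation intended to rule out $t := \alpha(G) < s$. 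A single operation $\GZS(\cdot\,|\,A,b)$ can raise the independence number by one, so after several iterations the intermediate graph may no longer satisfy $\alpha \le s$; moreover, once the first operation lowers the degree of a yet-unprocessed vertex adjacent to the processed one, the equality case of Fact~\ref{f:NRO}\,\ref{it:11} no longer applies, so the claim that each step preserves the edge count is also false. The assertion that the process reaches $K_{t,n-t}$ while respecting $\alpha\le s$, and the final ``descending'' step, remain unproved.

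For contrast, the paper's proof sidesteps both difficulties by a different choice of object: it fixes a pair $(X,Y)$ of disjoint independent sets with $|X| = \alpha(G)$ and, subject to this, $|Y|$ maximal. If $|Y| < |X|$, there is an edge $ab$ with $a,b\notin X$; either both endpoints have degree $\alpha(G)$, giving their disjoint neighbourhoods as the desired pair, or one application of $\GZS(G\,|\,X,a)$ strictly increases the edge count, whence extremality forces an independent $(\alpha(G)+1)$-set in $G'$, whose restriction to $G$ contradicts the maximality of $|Y|$. If $|X| = |Y| = \alpha(G) < s$, a single combined move $\GZS(Y\cup\{b\},a)$ followed by $\GZS(X\cup\{a\},b)$ on two vertices $a,b$ outside $X\cup Y$ yields a triangle-free graph with independence number at most $\alpha(G)+1 \le s$ and strictly more edges -- an immediate contradiction. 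Thus the paper uses at most two symmetrisations and needs neither a matching argument nor an unbounded iteration.
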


\begin{proof}
	Let $(X, Y)$ be a pair of disjoint independent sets in $G$ such that 
	$|X|=\alpha(G)$ and subject to this $|Y|$ is as large as possible. 
	Clearly $|Y|\le |X|\le s$	and we are to prove that equality holds throughout. 
	This could fail in two different ways. 
	
	\smallskip

	{\it  \hskip1em   Case 1.\quad $|Y|<|X|$} 

	\medskip

	Owing to $\alpha(G-X)=|Y|\le |X|-1\le s-1<n-s\le v(G-X)$ there is 
	an edge~$ab$ of~$G$ with $a, b\not\in X$. If both of $a$ and $b$ have degree 
	$\alpha(G)$, then their neighbourhoods are two disjoint independent sets of 
	size $\alpha(G)$, thus contradicting our choice of the pair~$(X, Y)$. 
	
	It follows that we may assume, without loss of generality, that $\deg(a)< \alpha(G)$.
	By Fact~\ref{f:NRO} the graph $G'=\GZS(G\,|\, X, a)$ is triangle-free and has more 
	edges than $G$. So the extremality of $G$ entails that $G'$ contains an independent 
	set of size $\alpha(G)+1$. Owing to $G-a=G'-a$ any such set needs to be of the 
	form $Z\cup\{a\}$, where $Z$ is an independent set in $G$ of size~$\alpha(G)$. Due to the 
	construction of $G'$ the sets $X$ and $Z$ need to be disjoint and thus the pair 
	$(X, Z)$ contradicts our choice of $(X, Y)$.   
	
	\smallskip

	{\it \hskip1em Case 2.\quad $|X| = |Y| = \alpha(G) < s$} 

	\medskip

	Now $|V(G)\setminus (X\cup Y)|\ge n-2(s-1)\ge 2$ and thus there are two distinct 
	vertices in this set, say $a$ and $b$. Let $G'$ be the result of applying 
	first $\GZS(Y\cup \{b\}, a)$ and then $\GZS(X \cup \{a\}, b)$ to $G$ (see
	Figure~\ref{fig:XYab}).
	
	\begin{figure}[ht]
			\centering
				\begin{tikzpicture}[scale=0.62]
				
				\coordinate (a) at (7,2);
				\coordinate (b) at (7,-2);
				
				\coordinate (x1) at (-1,2);
				\coordinate (x2) at (1,2);
				\coordinate (x3) at (3,2);
				\coordinate (x4) at (5,2);
			
				\coordinate (y1) at (-1,-2);
				\coordinate (y2) at (1,-2);
				\coordinate (y3) at (3,-2);
				\coordinate (y4) at (5,-2);
					
				\begin{pgfonlayer}{front}
				\foreach \i in {1,...,4}{
					\draw [green!75!black, very thick]  (a) -- (y\i) ;
					\draw [green!75!black, very thick]  (b) -- (x\i) ;
				}	
				\draw [green!75!black, very thick]  (a) -- (b) ;
				\foreach \i in {1,...,4}{
					\draw[blue!75!black, very thick]  (y\i) circle (2pt);
					\fill[blue!75!white]  (y\i) circle (2pt);
					\draw[blue!75!black, very thick]  (x\i) circle (2pt);
					\fill[blue!75!white]  (x\i) circle (2pt);
				}	
					\draw[red!75!black, very thick]  (a) circle (2pt);
					\fill[red!75!white]  (a) circle (2pt);
					\draw[red!75!black, very thick]  (b) circle (2pt);
					\fill[red!75!white]  (b) circle (2pt);
				\end{pgfonlayer}
				
				\draw[blue!75!black, line width=2pt] (2,2) ellipse (4.2cm and 18pt);
				\fill[blue!75!white,opacity=0.2] (2,2) ellipse (4.2cm and 18pt);
				
				\draw[blue!75!black, line width=2pt] (2,-2) ellipse (4.2cm and 18pt);
				\fill[blue!75!white,opacity=0.2] (2,-2) ellipse (4.2cm and 18pt);
				
				\node at (2.3,2) {\large\textcolor{blue!75!black}{$X$}};
				\node at (2.3,-2) {\large\textcolor{blue!75!black}{$Y$}};
				
				\node at (a)[above, right] {\large\textcolor{red!75!black}{$a$}};
				\node at (b) [below, right]{\large\textcolor{red!75!black}{$b$}};
					
				\end{tikzpicture}
				\caption{Possibly new edges of $G'$ are drawn green.}
				\label{fig:XYab}
			\end{figure}
	
	\noindent
	Observe that the  
	sets $Y\cup\{b\}$ and $X\cup\{a\}$ are independent in $G'$, whence 
	$G'$ is triangle-free. Since $ab$ is an edge of $G'$, 
	we have $\alpha(G')\le \alpha(G)+1\le s$. 
	Notice that for a vertex $v$ of $G$ we have $\deg_G(v)\le \alpha (G)=|X|=|Y|$ and so 
	\[
		e(G') \ge  e(G) - \deg_G(a)-\deg_G(b)+|X|+|Y|+1 > e(G)\,,
	\]
	which clearly contradicts the fact that $G\in \Ex$.
\end{proof}
	
We conclude this section with a result that provides additional information on 
the structure of some graphs in $\Ex$. 
	
\begin{lemma} \label{f:twosets}
	Given two integers $n\ge 0$ and $s\in \bigl[\frac13 n, \frac12 n\bigr]$, there
	exists a graph $G\in \Ex$ containing two disjoint independent sets $A$ 
	and $B$ of size $|A|=|B|=s$ having subsets $A'\subseteq B$ and $B'\subseteq A$ 
	with $|A'|=|B'|=3s-n$ and 
\begin{equation}\label{eqll}
		K(A',A)\cup K(B',B) \subseteq E(G)\,.
\end{equation}
\end{lemma}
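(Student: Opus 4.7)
The plan is to start from an arbitrary $G_0\in\Ex$, invoke Lemma~\ref{f:ABs} to obtain two disjoint independent sets $A$ and $B$ of size $s$, and then perform two successive generalised Zykov symmetrisations that each preserve extremality by Fact~\ref{f:NRO} and Lemma~\ref{f:NROs}. Each symmetrisation will manufacture the desired edges $K(A',A)$ or $K(B',B)$, and the real task is to ensure that the second operation does not undo the work of the first.

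Set $W=V(G_0)\sm(A\cup B)$, so that $|W|=n-2s$, and let $M$ be a largest matching in $G_0$ between $W$ and $B$. Since $|V(M)\cap B|=|M|\le |W|$, we have $|B\sm V(M)|\ge s-(n-2s)=3s-n$, and we may pick a subset $A'\subseteq B\sm V(M)$ of size $3s-n$. Form $G_1=\GZS(G_0\,|\,A,A')$. Fact~\ref{f:NRO} yields that $G_1$ is triangle-free with $e(G_1)\ge e(G_0)$ (since every $b\in A'$ satisfies $\deg_{G_0}(b)\le\alpha(G_0)=s=|A|$), while Lemma~\ref{f:NROs} applied with the present $(A,B,M,A')$ gives $\alpha(G_1)=s$. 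Combined with the extremality of $G_0$, this forces $G_1\in\Ex$, and $K(A',A)\subseteq E(G_1)$ holds by construction. The sets $A$ and $B$ remain disjoint independent sets of size $s$ in $G_1$ because the operation only alters edges incident to $A'\subseteq B$.

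Now repeat the argument with the roles of $A$ and $B$ swapped. A largest matching $M'$ in $G_1$ between $W$ and $A$ has size at most $|W|=n-2s$, so we may select $B'\subseteq A\sm V(M')$ of size $3s-n$ and set $G_2=\GZS(G_1\,|\,B,B')$. The same reasoning places $G_2\in\Ex$ and delivers $K(B',B)\subseteq E(G_2)$. The main obstacle, and the only point that really requires care, is verifying that the edges $K(A',A)$ secured by the first step survive the second one. For any vertex $b\in A'\subseteq B$, the second symmetrisation deletes the edges from $b$ to $B'$ (which exist in $G_1$ because $B'\subseteq A$ and $K(A',A)\subseteq E(G_1)$) but immediately reinstates them through $K(B,B')$, while the edges from $b$ to $A\sm B'$ are never touched, being neither incident to $B'$ nor contained in $K(B,B')$. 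Consequently $\Nn_{G_2}(b)\supseteq A$ for every $b\in A'$, and the graph $G=G_2$ together with the sets $A$, $B$, $A'$, $B'$ meets all the requirements of the lemma.
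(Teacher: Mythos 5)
Your proof is correct and follows essentially the same route as the paper: start from an extremal graph, obtain disjoint independent $s$-sets $A,B$ via Lemma~\ref{f:ABs}, then apply two generalised Zykov symmetrisations guarded by Fact~\ref{f:NRO} and Lemma~\ref{f:NROs}. The only small deviation is that you compute the second matching $M'$ in the intermediate graph $G_1$, whereas the paper computes both matchings $M_A,M_B$ in the original graph $G'$ at the outset (which works because the first symmetrisation does not touch the bipartite graph between $V\sm(A\cup B)$ and $A$); both variants are equally valid, and your explicit verification that the edges of $K(A',A)$ survive the second symmetrisation is a welcome bit of care that the paper leaves implicit.
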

	
\begin{proof}
	Let $G'\in\Ex$ and
	let $A, B\subseteq V(G')$ be any two disjoint independent sets of size $s$, the existence 
	of which is guaranteed by Lemma~\ref{f:ABs}. Set $X=V(G')\setminus (A\cup B)$ and 
	denote by $M_A$ and $M_B$ maximum matchings in $G'$ from $X$ to $A$ and from $X$ 
	to $B$, respectively. 
	
	Since $|X|=n-2s$ we have $|M_A|, |M_B|\le n-2s$, wherefore the sets 
	$A_*=B\setminus V(M_B)$ and $B_*=A\setminus V(M_A)$ 
	satisfy $|A_*|, |B_*|\ge s-(n-2s) = 3s-n$. Take arbitrary 
	subsets $A'\subseteq A_*$, $B'\subseteq B_*$ of size $3s-n$, and let $G$ 
	denote the graph arising from $G'$ 
	by applying first~$\GZS(A, A')$ and then $\GZS(B, B')$. 
	Two successive applications of Fact~\ref{f:NRO} and Lemma~\ref{f:NROs}
	tell us that   $G$ is  triangle-free,
	$\al(G)=s$, and $e(G)\ge e(G')$. Thus, we arrive at a graph
	$G\in\Ex$ for which (\ref{eqll}) holds.
\end{proof}

	
\section{The proof of Theorem~\ref{thm:2338}}

\subsection{Andr\'asfai's result} \label{subsec:31}

Let us recall that $g_2(n, s)=n^2-4ns+5s^2$. The main result of this subsection,
Proposition~\ref{f:k2strong} below, asserts that this expression is an upper bound on
the number of edges of triangle-free $n$-vertex graphs satisfying a less restrictive 
condition than $\alpha(G)\le s$. Therefore, this result provides a technical strengthening
of Andr\'asfai's theorem quoted in the introduction. Our reason for dealing 
with such a statement here is that we refer to it in the proof that $\ex\le g_3(n, s)$.   
We start with the following observation. 

\begin{lemma} \label{lem:g2}
	Let $n$ and $s$ be positive integers with $s\in\bigl[\frac 13n, \frac 12 n\bigr]$
	and suppose that $G$ is a triangle-free graph on $n$ vertices containing 
	two disjoint independent sets $A$ and $A'$ with $|A|=s$ and $|A'|\ge 3s-n$. If 
	\begin{enumerate}[label=\rmlabel]
		\item\label{it:31i} $\deg(a)\le s$ for all $a\in A$,
	\end{enumerate}
and
\begin{enumerate}[label=\rmlabel, resume]
		\item\label{it:31ii} $\Nn(a')=A$ for all $a'\in A'$,
	\end{enumerate}
	then $e(G)\le n^2-4ns+5s^2$. 
\end{lemma}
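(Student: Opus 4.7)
My plan is to split the edges of $G$ according to the partition $V(G)=A\dcup A'\dcup X$, where $X=V(G)\setminus(A\cup A')$, and to estimate each of the resulting classes of edges separately. Writing $t=|A'|$, so that $|X|=n-s-t$, hypothesis~\ref{it:31ii} immediately supplies $e(A,A')=st$ and $e(A',X)=0$, and records $\deg_{A'}(a)=t$ for every $a\in A$; combined with~\ref{it:31i} this also forces $t\le s$.

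Invoking~\ref{it:31i} once more, each $a\in A$ has at most $s-t$ neighbours in $X$, so summing over $a\in A$ yields $e(A,X)\le s(s-t)$. Since $G[X]$ is a triangle-free graph on $n-s-t$ vertices, Mantel's theorem gives $e(X)\le(n-s-t)^2/4$. Adding the four contributions, the cross-terms collapse and one is left with
\[
    e(G)\le st+s(s-t)+\tfrac14(n-s-t)^2=s^2+\tfrac14(n-s-t)^2.
\]

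At this point the lower bound $|A'|\ge 3s-n$ enters in precisely the right way: it rewrites as $n-s-t\le 2(n-2s)$, so $\tfrac14(n-s-t)^2\le(n-2s)^2$ and hence
\[
    e(G)\le s^2+(n-2s)^2=n^2-4ns+5s^2,
\]
as claimed. There is no genuine obstacle here; the argument is a short degree count together with Mantel's theorem. The only thing worth flagging is the pleasant numerical coincidence that the slack afforded by $|A'|\ge 3s-n$ is matched \emph{exactly} by Mantel's bound on $G[X]$, so that these two elementary ingredients already suffice to recover the sharp quadratic expression appearing in Andr\'asfai's theorem.
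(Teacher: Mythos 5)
Your proof is correct and uses essentially the same argument as the paper: bound the edges meeting $A$ by $s^2$ via hypothesis~\ref{it:31i}, note via~\ref{it:31ii} that $A'$ sends no edges outside $A$, and apply Mantel's theorem to the remaining graph. The only cosmetic difference is that the paper first shrinks $A'$ to have size exactly $3s-n$ so that Mantel is applied to a graph on exactly $2(n-2s)$ vertices, whereas you keep $t=|A'|$ general and absorb the slack at the end; both lead to the same calculation.
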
 

\begin{proof}
	By passing to a subset if necessary we may assume $|A'| = 3s-n$. 
	The graph $G-(A\cup A')$ has $n-s-(3s-n)=2(n-2s)$ vertices, so by Mantel's
	theorem it has at most $(n-2s)^2$ edges. By~\ref{it:31ii} all edges of $G-A$ 
	are actually edges of $G-(A\cup A')$ and thus we have 
	\[
		e(G)\le \sum_{a\in A}\deg(a) +  (n-2s)^2\,.
	\]
	Owing to~\ref{it:31i} this leads to $e(G)\le s^2+(n-2s)^2=n^2-4ns+5s^2$,
	as desired.
\end{proof}

We remark that the tools we have 
developed so far lead to a short proof of Andr\'asfai's main result in~\cite{A}.

\begin{proof}[Proof of Theorem~\ref{thm:1730}]
	Due to Lemma~\ref{f:twosets} there exists a graph
	$G\in \Ex$ containing two disjoint independent sets $A$ and $A'$
	with 
	\begin{equation} \label{eq:AAA}
		|A|=s, \quad |A'|=3s-n, \quad \text{and} \quad K(A',A)\subseteq E(G)\,.
	\end{equation}

	Recall that the absence of triangles in $G$ implies $\Delta(G)\le \alpha(G)\le s$. Thus $G$ 
	and the sets~$A$,~$A'$ satisfy the hypothesis of Lemma~\ref{lem:g2} and, consequently, 
   \[
   	\ex=e(G)\le n^2-4ns+5s^2\,.
	\]
   On the other hand, Fact~\ref{fact:1744} shows $\ex\ge n^2-4ns+5s^2$.
   (See also the blow-up of the pentagon presented in Figure~\ref{fig:pentagon}.)
\end{proof}

The main result of this subsection is a generalisation
of Theorem~\ref{thm:1730} that allows vertices whose degree is larger than $s$.
On the other hand, if one applies the statement that follows to a triangle free graph $G$ 
with $\alpha(G)=s\le \tfrac12n$, then $Z=\varnothing$ and an arbitrary choice of $Q$ leads 
to the estimate $e(G) \le n^2 - 4ns + 5s^2$. 	

\begin{prop}\label{f:k2strong}
	For $s\in\bigl[\frac13 n, \frac12 n\bigr]$, 
	let $G$ be a triangle-free graph on $n$ vertices 
	containing an independent set $A$ of size $s$, for which $\alpha(G-A) \le s$.
	Let
	\[
		Z = \{ v \in V(G)\setminus A\colon \deg(v) > s\}
	\]
	and let  $Q\subseteq V(G)\setminus (A\cup Z)$ be any set with $|Q| \ge 3s-n$.
	If every independent set $Z'\subseteq Z$ is matchable into 
	$V(G)\setminus (A\cup Z'\cup Q)$, then 
	\[
		e(G) \le n^2 - 4ns + 5s^2\,.
	\]
\end{prop}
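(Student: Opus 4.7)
The plan is to deduce Proposition~\ref{f:k2strong} from Lemma~\ref{lem:g2} by modifying $G$ through a sequence of Zykov-type symmetrisations from Section~\ref{subsec:Zykov}, producing a triangle-free graph $G^\ast$ on $V(G)$ with $e(G^\ast)\ge e(G)$ that satisfies the hypotheses of Lemma~\ref{lem:g2} with $A$ and a suitable independent subset $A'\subseteq Q$ of size $3s-n$ playing the roles of $A$ and $A'$ there. The conclusion $e(G)\le e(G^\ast)\le n^2-4ns+5s^2$ will then be immediate.

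The first stage uses the matchability hypothesis to eliminate the high-degree set $Z$. I would take a maximum independent subset $Z^\ast\subseteq Z$, use the hypothesis to obtain a matching $M=\{zp_z:z\in Z^\ast\}$ with partners $p_z\in V(G)\setminus(A\cup Z^\ast\cup Q)$, and for each matched pair apply a compound symmetrisation. A natural candidate is $\GZS(\Nn_G(z)\setminus\{p_z\},p_z)$ followed by $\GZS(A,p_z)$, whose net edge change computes directly to $s-\deg_G(p_z)\ge 0$ (using $p_z\notin Z$); triangle-freeness is preserved by Fact~\ref{f:NRO}\ref{it:22} since both $\Nn_G(z)$ and $A$ are independent sets. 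After the pair is processed, $p_z$ has neighbourhood exactly $A$ and $\deg(z)$ has dropped by one. Iterating across pairs and, where needed, reapplying the matchability hypothesis to the modified graph, one eliminates $Z$ entirely. Crucially, because $p_z\notin Q$, this stage does not disturb the set $Q$.

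In the second stage, with $Z$ now empty, I would apply $\GZS(A,q)$ to each $q\in Q$. By Fact~\ref{f:NRO}\ref{it:11} this does not decrease the edge count (since $\deg(q)\le s=|A|$ at this point) and by~\ref{it:22} it preserves triangle-freeness; the resulting graph $G^\ast$ has $\Nn_{G^\ast}(q)=A$ for every $q\in Q$. Picking any $A'\subseteq Q$ with $|A'|=3s-n$ and applying Lemma~\ref{lem:g2} to $G^\ast$ with the pair $(A,A')$ would then close the argument.

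The main obstacle I expect is verifying the remaining hypothesis of Lemma~\ref{lem:g2}, that $\deg_{G^\ast}(a)\le s$ for every $a\in A$. Every symmetrisation of the form $\GZS(A,\cdot)$ can inflate some $\deg(a)$, and although triangle-freeness prevents $a$ from simultaneously acquiring two neighbours inside a single $\Nn_G(\cdot)$, the cumulative effect over many operations must still be controlled. The matchability condition is tailored precisely for this accounting: the partners live in $V\setminus(A\cup Z^\ast\cup Q)$, so the two stages act on essentially disjoint parts of the graph, and interleaving the steps with the $\alpha$-preserving operations of Lemma~\ref{f:NROs} allows one to maintain $\Nn_{G^\ast}(a)$ as an independent subset of $V\setminus A$ of size at most $\alpha(G^\ast-A)\le s$, which in turn gives $\deg_{G^\ast}(a)\le s$. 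Making this bookkeeping precise is the heart of the proof and the place where the full strength of the hypothesis on $Z$ is used.
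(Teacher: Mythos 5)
Your high-level plan (symmetrise until Lemma~\ref{lem:g2} applies) matches the paper's, but the concrete sequence of operations you propose has a genuine gap exactly where you flag one, and the tools you cite do not fill it. The crux is that your per-vertex operations $\GZS(A,p_z)$ and $\GZS(A,q)$ isolate an \emph{uncontrolled} collection of vertices of $V(G)\setminus A$ inside $G^\ast-A$, and isolating vertices can strictly increase the independence number of a graph. Nothing in your construction gives $\alpha(G^\ast-A)\le s$, so $\deg_{G^\ast}(a)\le s$ is not established and Lemma~\ref{lem:g2} cannot be invoked. Lemma~\ref{f:NROs} does not rescue this: it requires two disjoint independent sets $A,B$ of size $\alpha(G)$ and a \emph{maximum} matching $M$ from $V(G)\setminus(A\cup B)$ to $B$, and licenses only $\GZS(A,B')$ for $B'\subseteq B\setminus V(M)$; your one-vertex-at-a-time operations do not fit that mould, and your $p_z$'s are not the unmatched part of any maximum matching. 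There are two further difficulties in Stage~1: the matchability hypothesis is stated for $G$ and need not persist after you modify the graph, so ``reapplying'' it is unjustified; and one pass only lowers $\deg(z)$ by one, so a vertex with $\deg_G(z)\gg s$ would need many distinct partners, which the hypothesis does not supply.

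The paper's proof is structurally different and sidesteps all of this. It does not try to eliminate $Z$. Instead it takes a counterexample with $|\{q\in Q:\Nn(q)=A\}|$ maximal, deduces from this extremal choice (via an auxiliary step $\GZS(A,q^\ast)$) that $\alpha(G-A)=s$, fixes an independent set $B\subseteq V(G)\setminus A$ of size $s$, applies the matchability hypothesis \emph{once} together with Fact~\ref{f:bipartite} to obtain a maximum matching $M$ from $B$ to $X=V(G)\setminus(A\cup B)$ covering $Z\cap B$, and then performs a \emph{single} symmetrisation $G'=\GZS(G\,|\,A,B\setminus V(M))$. Since $B\setminus V(M)$ is disjoint from $Z$, Fact~\ref{f:NRO} gives $e(G')\ge e(G)$; since $B\setminus V(M)$ is precisely the unmatched part of a maximum matching, Lemma~\ref{f:isolating} gives $\alpha(G'-A)=\alpha(G-A)=s$, whence $\deg_{G'}(a)\le s$ for all $a\in A$ and Lemma~\ref{lem:g2} applies. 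That one controlled symmetrisation, combined with the extremal choice of counterexample, is the idea missing from your argument.
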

\begin{proof}
	Let $n$ and $s$ be fixed and assume for the sake of contradiction, that there 
	exists a counterexample, i.e., a triple $(G, A, Q)$ satisfying all the assumptions 
	of Proposition~\ref{f:k2strong}, but for which $G$ has more than $n^2-4ns+5s^2$ edges. 
	Among all such counterexamples we choose one for which the size of the set
	\[
		Q'=\{q\in Q\colon \Nn(q)=A\}
	\]
	is as large as possible. Observe that Lemma~\ref{lem:g2} applied to $Q'$ here in place of $A'$ 
	there yields $|Q'|<3s-n$, whence 
	\begin{equation}\label{eq:Q}
		Q'\neq Q\,.
	\end{equation}
	
	Next we use the maximality of $Q'$ for showing that the 
	assumption $\alpha(G-A) \le s$ holds with equality. 
	
	\goodbreak
	
	\begin{claim}\label{clm:1917}
		There is an independent set $B\subseteq V(G)\setminus A$ of size $s$.
	\end{claim}
	
	\begin{proof}
		Assume $\alpha(G-A)\le s-1$.
		Owing to~\eqref{eq:Q} we may pick a vertex $q_*\in Q\setminus Q'$ and apply 
		$\GZS(A, q^*)$ to $G$, thus getting a graph $G^*$. By Fact~\ref{f:NRO} 
		and $q^*\not\in Z$ we know that $G^*$ is triangle-free and has at least as many 
		edges as $G$. Clearly, $\al(G^*-A)\le \al(G-A) + 1 \le s$ and 
		the assumption of Proposition~\ref{f:k2strong} holds for $G^*$ and  the 
		sets $A$, $Q$, 
		and 
		\[
			Z^*= \{ v^* \in V(G^*)\setminus A\colon \deg_{G^*}(v^*) > s\}\subseteq Z\,.
		\]
		Since $|Q'\cup \{q^*\}|>|Q'|$, the  maximality of $|Q'|$ implies
		that $(G^*, A, Q)$ cannot be a counterexample to our result, which yields
		\[
			e(G)\le e(G^*) \le  n^2-4ns + 5s^2\,, 
		\]
		contrary to the choice of $(G, A, Q)$. Thereby Claim~\ref{clm:1917} is proved.
	\end{proof}
	
	Working with the set $B$ obtained in the previous claim we define $X=V(G)\sm (A\cup B)$  
	and $Z'=Z\cap B$. Due to the independence of $B$ and our hypothesis $Z'$ is matchable 
	into $X$. So Fact~\ref{f:bipartite} applied to $Z'$, $B$, and $X$ here in place of
	$R'$, $R$, and $S$ there yields a maximum matching $M$ between $B$ and $X$ which 
	covers $Z'$. 
	
	Now we set $B'=B\setminus V(M)$ and look at the graph $G'=\GZS(G\,|\, A, B')$. 
	Note that 
	$$|B'|\ge s-|X|=s-(n-2s)=3s-n\,.$$
	Fact~\ref{f:NRO} reveals that $G'$ is triangle-free and satisfies $e(G')\ge e(G)$.
	Lemma~\ref{f:isolating} applied to $G-A$, $B$, and~$M$ reveals 
	$\al(G'-A)= \alpha(G-A)=s$ and, consequently, every $a\in A$ has at most $s$ 
	neighbours in $G'$. Thus $G'$, $A$, and $B'$ here in place of $G$, $A$, 
	and $A'$ there satisfy the assumptions of Lemma~\ref{lem:g2}, meaning that   
	\[
		e(G) \le e(G')\le n^2-4ns+5s^2\,.
	\]
	This contradiction to $(G, A, Q)$ being a counterexample establishes 
	Proposition~\ref{f:k2strong}.
\end{proof}
	
\subsection{Blow-ups of Wagner graphs}
The present subsection completes the proof of Theorem~\ref{thm:2338},
asserting that every graph $G\in\Ex$ satisfies 
$e(G)\le 3n^2 - 15 ns +20 s^2$. Recall that by Lemma~\ref{f:ABs} any such graph $G$ 
contains two disjoint independent sets $A$ and~$B$ of size $s$. Our next result shows 
that if there exists a further independent set of size~$s$ having appropriate intersections 
with $A$ and $B$, then we can reach our goal.  
	
\begin{lemma}\label{f:3sets}
	Given $n\ge s \ge 0$ let $G$ be a triangle-free graph on $n$ vertices with 
	$\alpha (G) = s$. If $G$ contains three independent sets $A$, $B$, and $C$ 
	of size $s$ such that $A\cap B, A\cap C= \emptyset$ and $|B\cap C|\le n-2s$,
	then 
	\[
		e(G) \le 3n^2 - 15 ns +20 s^2\,.
	\]
\end{lemma}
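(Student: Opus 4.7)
The plan is to apply Proposition~\ref{f:k2strong} twice — first to $G - T$ with the independent set $A$, then to $G - T - A$ with the independent set $B_0$ — and to combine the resulting bounds with structural information from all three independent sets.

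Setup: I let $T = B \cap C$, $t = |T|$, $B_0 = B \setminus T$, $C_0 = C \setminus T$, and $X = V(G) \setminus (A \cup B \cup C)$, so that $V(G)$ is partitioned into parts of sizes $s$, $s - t$, $t$, $s - t$ and $n - 3s + t$. The hypothesis $|B \cap C| \le n - 2s$ reads $t \le n - 2s$, and $|A \cup B \cup C| \le n$ forces $t \ge 3s - n$. The crucial observation is that every $v \in T$ satisfies $N(v) \subseteq A \cup X$, since $T \subseteq B \cap C$ and both $B$, $C$ are independent.

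Main steps: First I apply Proposition~\ref{f:k2strong} to $G - T$ on $n - t$ vertices with the independent set $A$; the required conditions ($s \in [(n-t)/3, (n-t)/2]$, $\alpha((G-T) - A) \le s$, $Z = \varnothing$ because all degrees are at most $s$, any sufficiently large $Q$) all follow from our bounds on $t$, yielding $e(G - T) \le g_2(n - t, s)$. Combined with the trivial $\sum_{v \in T} \deg(v) \le st$, this gives $e(G) \le g_2(n - t, s) + st$, which a short computation shows exceeds $g_3(n, s)$ by $(2n - 5s)(3s - n) + t(t + 5s - 2n) > 0$ throughout the Wagner range, so this alone does not suffice. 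To close the gap, I perform symmetrization via Fact~\ref{f:NRO}, Lemma~\ref{f:NROs} and Lemma~\ref{f:isolating} to reduce to the canonical configuration where every $v \in T$ has $N(v) = A$ (so $\sum_{v \in T} \deg(v) = st$ and $e(T, X) = 0$), and then apply Proposition~\ref{f:k2strong} a second time to $G - T - A$ on $n - s - t$ vertices with independent set $B_0$ of size $s - t$, obtaining $e(G - T - A) \le g_2(n - s - t, s - t)$. Combining this with $\sum_{v \in A} \deg(v) \le s^2$, $\sum_{v \in T} \deg(v) = st$, $e(T, X) = 0$, and the standard accounting of edges involving $X$ via $\sum_X \le s|X|$ and triangle-freeness inside $X$, then simplifying the resulting polynomial in $n, s, t$, gives the desired bound $e(G) \le g_3(n, s) = 3n^2 - 15ns + 20s^2$.

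Main obstacle: The principal difficulty lies in the symmetrization that forces $N(v) = A$ for every $v \in T$ while preserving $\alpha(G) \le s$. The operation $\GZS(A, T)$ can create an independent set of size greater than $s$ inside $B_0 \cup C_0 \cup X \cup T$ — note that $|B_0 \cup C_0 \cup T| = 2s - t > s$ in the Wagner range — so one must first establish $\alpha(G - A - T) \le s - t$, which does not follow directly from the hypotheses. This likely requires an extremal-choice argument in the spirit of the proof of Proposition~\ref{f:k2strong} itself: pick a putative counterexample that maximizes an auxiliary quantity (such as the number of $T$-vertices with $N(v) = A$, or $|B \cap C|$) and use a symmetrization to increase it, reaching a contradiction. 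The hypotheses for the second application of Proposition~\ref{f:k2strong} (in particular $\alpha((G - T - A) - B_0) \le s - t$) pose the same type of difficulty and must be handled analogously. Since the two applications of Proposition~\ref{f:k2strong} by themselves would collapse only to $g_2(n, s)$, the final algebraic combination must carefully exploit that in the canonical configuration some vertex of $B_0 \cup C_0$ has degree strictly less than $s$ — reflecting the fact that the Wagner graph $\Gamma_3$ admits a vertex whose neighbourhood is not a maximum independent set — producing the additional deficit $(2n - 5s)(8s - 3n)$ in $\sum_v \deg(v)$ that sharpens the bound from $g_2(n, s)$ down to $g_3(n, s)$.
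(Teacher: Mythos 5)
Your plan is a genuinely different decomposition from the paper's, and it contains two substantive gaps that you partially acknowledge but do not resolve; as it stands the argument does not go through.

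The paper does \emph{not} remove $T=B\cap C$ in full. Instead, Claim~\ref{clm:37} symmetrises to produce three pairwise disjoint sets of size \emph{exactly} $3s-n$: $B'\subseteq A$ joined completely to $B$, $C'\subseteq A\setminus B'$ joined completely to $C$, and $A'\subseteq B\cap C$ joined completely to $A$. It then deletes $A'\cup B'\cup C'$ to form $G^*$ on $n^*=4n-9s$ vertices, takes $A^*=B\setminus A'$ of size $s^*=n-2s$, checks (Claim~\ref{clm:1913}) that $(G^*,A^*,Q^*)$ with $Q^*=C\setminus B$ satisfies the hypothesis of Proposition~\ref{f:k2strong}, and applies that proposition \emph{once}, getting $e(G^*)\le 5n^2-24ns+29s^2$, which combined with the exactly known edge contributions of $A',B',C'$ yields $g_3(n,s)$. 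The crucial design feature is that $s^*/n^*\in[\tfrac25,\tfrac12]$ throughout the Wagner range, so $g_2(n^*,s^*)$ is tight for the reduced instance; both parameters shrink by the right amounts simultaneously.

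Your two acknowledged obstacles are real and, in the form you propose, not fixable without essentially rediscovering the paper's idea. First, forcing $N(v)=A$ for \emph{every} $v\in T=B\cap C$ via $\GZS(A,T)$ is not in general admissible: Lemma~\ref{f:NROs} only lets you symmetrise the part of an independent set that lies outside a maximum matching from $V(G)\setminus(A\cup B)$ to $B$, and nothing guarantees that $T$ avoids such a matching. The paper circumvents this by symmetrising only a subset of $B\cap C$ of size $3s-n$, chosen deliberately outside a matching (and then showing $A'\subseteq C$ via the independence-number argument $|C\cup A'|\le s$). Second, and more fundamentally, even in your idealised ``canonical'' configuration the algebra does not close. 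Your own computation shows the one-application bound $g_2(n-t,s)+st$ exceeds $g_3(n,s)$ by $(2n-5s)(3s-n)+t(t+5s-2n)$; the two-application bound $s^2+g_2(n-s-t,s-t)$ collapses exactly to $g_2(n,s)=n^2-4ns+5s^2$ at $t=3s-n$, which is strictly larger than $g_3(n,s)$ in the interior of the Wagner range. So when $|B\cap C|$ is near its minimum value $3s-n$ — the case the lemma must cover — your decomposition loses precisely the quantity $(2n-5s)(3s-n)$ that separates $g_2$ from $g_3$. Your proposed repair, an unspecified ``additional deficit $(2n-5s)(8s-3n)$ in $\sum_v\deg(v)$'', is not established and is in fact the wrong magnitude: the needed deficit in $\sum_v\deg(v)$ is $2(2n-5s)(3s-n)=(2n-5s)(6s-2n)$, which exceeds $(2n-5s)(8s-3n)$ whenever $2s<n$. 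The paper's choice of removing the third set $C'$ as well, and invoking Proposition~\ref{f:k2strong} on the smaller graph $G^*$, is exactly what supplies this missing slack in a controlled way.
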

\begin{proof}
	Recall that by the case $k=3$ of Fact~\ref{f:kn<s<(k-1)n} we may assume 
	$\frac 38n<s< \frac 25 n$. 
	Our argument is reminiscent of the proof of 
	Lemma~\ref{f:twosets}. Fix three independent sets~$A$,~$B$, and~$C$ in~$G$ 
	such that $A$ is disjoint to $B$, $C$ and $|B\cap C|\le n-2s$.  
	
	\begin{claim} \label{clm:37}
	We may assume that there are sets $B'\subseteq A$, $C'\subseteq A\setminus B'$,
	and $A'\subseteq B\cap C$ of size $|A'|=|B'|=|C'|=3s-n$ such that 
	\[
		K(A',A) \cup K(B',B) \cup K(C',C) \subseteq E(G)\,.
	\]
	\end{claim} 
	
	\begin{proof}	
		Take a maximum matching $M_B$ from $V(G)\setminus (A\cup B)$ into $A$ and 
		consider the graph $G_1=\GZS\bigl(G\,|\, B, A\setminus V(M_B)\bigr)$. 
		By Fact~\ref{f:NRO} this graph is triangle-free and satisfies $e(G_1)\ge e(G)$. 
		Lemma~\ref{f:NROs} yields $\alpha(G_1)=s$ and one checks easily 
		that $A$, $B$, and $C$ are still independent in $G_1$. Moreover, 
		\[
			|A\setminus V(M_B)|\ge |A|-|V(G)\setminus (A\cup B)| =s-(n-2s)=3s-n>0
		\]
		%
		so there is a set $B'\subseteq A\setminus V(M_B)$
		with $|B'|=3s-n$ and clearly we have $K(B',B) \subseteq E(G_1)$.   
		
		Next we observe that the assumption $|B\cap C|\le n-2s$ entails 
		$|B\setminus C|\ge 3s-n=|B'|$ and therefore in $G_1$ the set $B'$ is matchable 
		into $B\setminus C$, which is a subset of $V(G)\setminus (A\cup C)$. 
		So by Fact~\ref{f:bipartite} there is a maximum matching $M_C$ from 
		$V(G)\setminus (A\cup C)$ to $A$ which covers all vertices in $B'$.
		As in the previous paragraph one proves that the graph 
		$G_2=\GZS(G_1\,|\, C, A\setminus V(M_C))$ is triangle-free, has independence number $s$
		and at least as many edges as $G_1$. Also, as before one finds a set 
		$C'\subseteq A\setminus V(M_C)$ with $|C'|=3s-n$ and observes 
		$K(C',C) \subseteq E(G_2)$. The sets $A$, $B$, and $C$ are 
		still independent in $G_2$ and our reason for insisting on $B'\subseteq V(M_C)$
		was that it ensures $C'\subseteq A\sm B'$.
	
		Finally, we let $M_A$ be a maximum matching in $G_2$ from $V(G)\setminus (A\cup B)$ 
		to $B$ and put $G_3=\GZS\bigl(G_2\,|\, A, B\setminus V(M_A)\bigr)$. Standard 
		arguments show that $G_3$ is triangle-free and satisfies $\alpha(G_3)=s$ as well as 
		$e(G_3)\ge e(G_2)$. Furthermore, there is a set $A'\subseteq B\setminus V(M_A)$  
		with $|A'|=3s-n$  such that $K(A',A) \subseteq E(G_3)$.
		Moreover, since $C\cup A'$ is an independent set, we have $A'\subseteq C$.
		Altogether, the graph $G_3$ has all desired properties and owing to 
		$e(G_3)\ge e(G)$ we may continue the proof with $G_3$ instead of $G$.
		This proves Claim~\ref{clm:37}.
	\end{proof}

	Now we set $n^*=4n-9s$, $s^*=n-2s$,  
	$G^*=G-(A'\cup B'\cup C')$, and $A^* = B\setminus A'$, $Q^* = C\setminus B$.
	
	\begin{claim}\label{clm:1913}
		The numbers $n^*$ and $s^*$ as well as the triple $(G^*, A^*, Q^*)$ 
		satisfy the hypothesis of Proposition~\ref{f:k2strong}. 
	\end{claim}
	
	\begin{proof}
		A quick calculation based on $s\in \bigl[\frac 13 n, \frac 25 n\bigr]$
		shows that $n^*>0$ and $s^*\in\bigl[\frac 13 n^*, \frac12 n^*]$, i.e., that the 
		size of $s^*$ is in the appropriate range. 
		Owing to
		\[
			|V(G^*)|=n-(|A'|+|B'|+|C'|)=n-3(3s-n)=4n-9s=n^*
		\]
		and $|A^*|=|B|-|A'|=s-(3s-n)=n-2s=s^*$ the sets $V(G^*)$ and $A^*$
		have the correct size. Next, we would like to show $\alpha(G^*-A^*)\le s^*$.
		If $J\subseteq V(G^*)\setminus A^*$ is independent in~$G^*$, then $J\cup B'$ 
		is independent in $G$, and thus we have $|J|\le s-|B'|=s^*$, as desired. 
		
		Regarding the set 
		\[
			Z^* = \{v \in V(G^*)\sm A^* \colon \deg_{G^*}(v)> s^*\}
		\]
		we contend  
		\begin{equation}\label{eq:Z*}
			Z^*\subseteq V(G)\setminus (A\cup B\cup C)\,.
		\end{equation}
		Indeed, if $v\in V(G^*)\cap A$, then 
		$\deg_{G^*}(v)= \deg_G(v)-|A'|\le s-(3s-n)=s^*$
		and the same reasoning applies to $B$ and $C$ in place of $A$ as well. 
		
		As a consequence of~\eqref{eq:Z*} we have 
		$Q^*\subseteq V(G^*)\setminus (A^*\cup Z^*)$,
		as required. The assumption $|B\cap C|\le n-2s$ implies 
		$|Q^*|=s - |B\cap C|\ge 3s-n = 3s^* - n^*$,
		so $Q^*$ is sufficiently large for our purposes. 
		
		Finally, if $Z'\subseteq Z^*$ is independent in $G^*$, then $Z'$ is also 
		independent in $G$ and~\eqref{eq:Z*} shows that $Z'$ has to be disjoint to $A$. 
		Thus, Fact~\ref{f:matchable} tells us that in $G$ there is a matching $M$ from 
		$Z'$ to $A$. By Claim~\ref{clm:37} and~\eqref{eq:Z*} such a matching 
		can only use the part $A\setminus (B'\cup C')$ of $A$, meaning that 
		$Z'$ is indeed matchable into $V(G^*)\setminus (A^*\cup Z'\cup Q^*)$.
		Thereby Claim~\ref{clm:1913} is proved.
	\end{proof}
	
	Now the foregoing claim and Proposition~\ref{f:k2strong} result in 
	\[
		e(G^*)\le {n^*}^2-4n^*s^* + 5{s^*}^2 = 5n^2-24ns+29s^2\,,
	\]
	so
	\begin{align*}
		e(G) & = |B||B'|+|C||C'|+|A\setminus (B'\cup C')||A'|+e(G^*) \\
			&\le 2s(3s-n)+(2n-5s)(3s-n)+(5n^2-24ns+29s^2) \\
			&=3n^2-15ns+20s^2\,. \qedhere
	\end{align*}
\end{proof}
	
Perhaps somewhat surprisingly, the previous result allows us to study a similar 
configuration, where $C$ is no longer disjoint to one of $A$ and $B$. 
	
\begin{lemma}\label{f:ABC}
	Suppose $n\ge s\ge 0$ and let $G$ be a triangle-free graph on $n$ vertices 
	with $\alpha (G) = s$ containing two disjoint independent sets $A$ and $B$ 
	of size $s$, which in turn have subsets $A'\subseteq B$, $B'\subseteq A$ 
	with $|A'|=|B'|=3s-n$ and 
	\[
		K(A',A)\cup K(B',B)\subseteq E(G)\,.
	\]
	If $G$ contains a further independent set $C$ of size $s$ intersecting both $A$ 
	and $B$, then
	\[
		e(G) \le 3n^2 - 15ns + 20s^2\,.
	\]
\end{lemma}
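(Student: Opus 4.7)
My plan is to reduce Lemma~\ref{f:ABC} to Lemma~\ref{f:3sets} by producing (via judicious symmetrisations if needed) a third independent set $C^*$ of size $s$ in $G$ that is disjoint from $A$ and satisfies $|B\cap C^*|\le n-2s$. Throughout, write $X=V(G)\setminus(A\cup B)$, $C_A=C\cap A$, $C_B=C\cap B$, $C_X=C\cap X$, and set $a=|C_A|$, $b=|C_B|$, $x=|C_X|$.

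\smallskip

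First I would pin down the structure. Since $K(A',A)\subseteq E(G)$, $|A|=s=\alpha(G)$, and every vertex has degree at most $\alpha(G)=s$ (triangle-freeness), each $a'\in A'$ satisfies $N(a')=A$ exactly; in particular, $A'$ has no neighbour outside $A$, and similarly $B'$ has no neighbour outside $B$. Combined with $C\cap A\ne\emptyset\ne C\cap B$ and the independence of $C$, this forces $C\cap(A'\cup B')=\emptyset$ and implies that $A'\cup(C\setminus C_A)=A'\cup C_B\cup C_X$ is independent in $G$ (the subset $A'\cup C_B$ is independent as a subset of $B$, while $A'$ has no edges to $C_X\subseteq X$ by the neighbourhood restriction above, and $C_B\cup C_X$ is independent as a subset of $C$). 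Its size $(3s-n)+(s-a)$ is bounded by $\alpha(G)=s$, giving $a\ge 3s-n$, and symmetrically $b\ge 3s-n$. Combined with the routine bound $a,b\le n-2s$ (which comes from $|N(c)\cap B|\ge|A'|=3s-n$ and $C_B\cap N(c)=\emptyset$ for $c\in C_A$, and its mirror), we obtain $a,b\in[3s-n,\,n-2s]$.

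\smallskip

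The natural candidate is $C^*_0=A'\cup(C\setminus C_A)$, an independent subset of $B\cup X$ of size $4s-n-a$. When $a=3s-n$ it already has size~$s$; in general I would extend it to size~$s$ by adjoining $a-(3s-n)$ vertices from $X\setminus C_X$, whose existence and independence from $C^*_0$ I would arrange using the perfect matching between $A\setminus C_A$ and $C_B\cup C_X$ supplied by Fact~\ref{f:matchable} (applied to $C$, of maximal size, and $A\setminus C_A$), together with Fact~\ref{f:bipartite}. A symmetric candidate $C^{**}$ using $B'$ in place of $A'$ is built the same way. With such a $C^*$ the triple $(A,B,C^*)$ meets all but the intersection condition of Lemma~\ref{f:3sets}; this last condition, $|B\cap C^*|\le n-2s$, amounts to $b\le 2n-5s$ for $C^*_0$ and to $a\le 2n-5s$ for $C^{**}$.

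\smallskip

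The main obstacle is the case in which both $a,b>2n-5s$, for which neither of the two direct candidates works. In this delicate range I would perform a sequence of generalised Zykov symmetrisations on $G$ itself, using Fact~\ref{f:NRO} to ensure that the edge count does not decrease and Lemma~\ref{f:NROs} to preserve~$\alpha$, in order to produce an auxiliary triangle-free graph $\tilde G$ on $V(G)$ with $\alpha(\tilde G)=s$, $e(\tilde G)\ge e(G)$, and admitting a third independent set that does satisfy the intersection bound required by Lemma~\ref{f:3sets}. Once $\tilde G$ is obtained, Lemma~\ref{f:3sets} applied to $\tilde G$ yields $e(G)\le e(\tilde G)\le 3n^2-15ns+20s^2$, closing the proof.
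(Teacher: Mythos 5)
Your reduction of Lemma~\ref{f:ABC} to Lemma~\ref{f:3sets} is in the right spirit, and your preliminary observations are correct: $C\cap(A'\cup B')=\emptyset$, each $a'\in A'$ satisfies $N(a')=A$, and $|A\cap C|,|B\cap C|\in[3s-n,\,n-2s]$. However, the argument has two genuine gaps, and the second one is exactly where the difficulty of the lemma lies.

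First, the extension step is not established. You want to enlarge $C_0^*=A'\cup(C\setminus C_A)$ to an independent set of size $s$ by adding only vertices from $X\setminus C_X$. But an independent set that fails to be maximum need not extend in a direction of your choosing: every vertex of $X\setminus C_X$ may well be adjacent to $C_0^*$, forcing any extension to pass through $B$ and thereby spoiling the bound on $|B\cap C^*|$. Invoking a matching between $A\setminus C_A$ and $C_B\cup C_X$ (via Fact~\ref{f:matchable} and Fact~\ref{f:bipartite}) does not produce such extension vertices in $X$; those facts control matchings, not the extendability of $C_0^*$ into $X$.

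Second, and more importantly, the case $|A\cap C|,|B\cap C|>2n-5s$ is left entirely open. This case really does occur: with $a+b\le s$ it only requires $s>4n/11$, which holds throughout $\bigl[\tfrac38 n,\tfrac25 n\bigr]$. Your proposal to ``perform a sequence of generalised Zykov symmetrisations'' to produce a friendlier $\tilde G$ gives no indication of which operations to apply or why a suitable third independent set would emerge afterwards; as written this is a placeholder, not an argument. The paper circumvents this case with a global extremal device: it takes a hypothetical counterexample maximising $|Q|$ where $Q=\{q\colon N(q)=C\}$, shows via symmetrisation and Lemma~\ref{f:3sets} that necessarily $V(G)=A\cup B\cup C\cup Q$, deduces $|B\cap C|=3s-n$ and $|Q|=|A\cap C|$ from a chain of forced equalities, and then applies Lemma~\ref{f:3sets} to the independent set $E=(A\setminus C)\cup Q$ together with $B$ and $A$. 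This maximality-of-$Q$ trick is the missing idea; a direct construction of a third set disjoint from $A$, as you attempt, does not seem to go through without it.
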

\begin{proof}
	Regarding $n$ and $s$ as being fixed we consider a counterexample with $|Q|$ maximal, 
	where
	\[
		Q = \{q\in V\colon \Nn(q)=C\}\,.
	\]

	Let us start with a few basic observations. First of all, we have $C\cap A'=\emptyset$,
	because $K(A',A)\subseteq E(G)$ and $A$ intersects $C$. Similarly, one checks
	$C\cap B'=\emptyset$. Furthermore, since $\Delta(G)\le \alpha(G)=s$, the sets $(C\setminus A)\cup A'$ 
	and $(C\setminus B)\cup B'$ are independent. Hence they consist of at most $s=|C|$ 	
	vertices each, which yields  
	\begin{equation}\label{eq:CcapA}
		|A\cap C| \ge |A'| = 3s -n \qand |B\cap C| \ge |B'| = 3s -n\,.
	\end{equation}
	Finally, using the definition of $Q$, the independence of $A$, $B$, and $C$, and 
	the fact that $C$ meets $A$ and $B$, one obtains
	\begin{equation}\label{eq:QABC}
		Q \cap (A\cup B \cup C) = \emptyset \,.
	\end{equation}
	The maximality of $Q$  together with Lemma~\ref{f:3sets} leads to the following statement.
	
	\begin{claim}\label{cl:1}
		We have $V(G)=A \cup B \cup C \cup Q$.
	\end{claim}
		
	\begin{proof}
		Assume contrariwise, that there exists a vertex 
		$q\in V(G)\setminus (A\cup B\cup C\cup Q)$ and set $G'=\GZS(G\,|\, C, q)$. The sets 
		$A$, $B$, $A'$, $B'$, and $C$ still satisfy the hypothesis of Lemma~\ref{f:ABC}
		in $G'$. Moreover, Fact \ref{f:NRO} tells us that $G'$ is a triangle-free graph 
		with $e(G')\ge e(G)$. Due to the maximality of $|Q|$ all this is only 
		possible if $\alpha (G')>s$. This means that there exists an independent set 
		$D'=\{q\}\cup D\subseteq V(G')$ in $G'$ with $|D'|>s$. As usual, $D$ needs to be 
		an independent set in $G$ with $|D|=s$ and $C \cap D=\emptyset$.
		
		The case $k=3$ of Fact~\ref{f:kn<s<(k-1)n} allows us to assume $s>3n/8$,
		which leads to 
		\[
			|A'| + |B'| + |C| + |D| = 8s - 2n > n\,.
		\]
		Thus $D\cap (A'\cup B')\neq\emptyset$ and without loss of generality 
		we may suppose $D\cap A'\neq\emptyset$, which in turn implies $D\cap A = \emptyset$. 
		Moreover, we have 
		\[
			|B\cap D|\le |B\setminus C| = |B| - |B\cap C| 
			\overset{\eqref{eq:CcapA}}{\le} 
			n-2s\,.
		\]

		So altogether the graph $G$ and the sets of vertices $A$,~$B$, and~$D$ satisfy 
		the assumption of Lemma~\ref{f:3sets} with $D$ here in place of $C$ there. 
		But now $e(G) \le 3n^2 - 15ns +20 s^2$ contradicts~$G$ being a counterexample
		and, hence, establishes Claim~\ref{cl:1}.
	\end{proof}
		
	Continuing the proof of Lemma~\ref{f:ABC} we observe that, owing to the definition 
	of $Q$, the set $(A\setminus C)\cup Q$ is independent, and so $|Q|\le |A\cap C|$. 
	In combination with Claim~\ref{cl:1} this yields
	\[
		n 
		\overset{\eqref{eq:QABC}}{=} 
		|A \cup B \cup C|+|Q| \le 3s - |A \cap C| - |B \cap C| + |A\cap C|
		=3s - |B\cap C|\overset{\eqref{eq:CcapA}}{\le} n\,,
	\]
	and equality holds throughout. In particular, we have $|Q|= |A\cap C|$ and 
	consequently $E=(A\setminus C)\cup Q$ is an independent set consisting of $s$ vertices.
	To complete the proof it suffices to show that the assumptions of Lemma~\ref{f:3sets} 
	are satisfied for $B$, $A$, and $E$ here in place of $A$, $B$, and $C$ there. 
	The condition $B\cap E = \emptyset$ is clear and in the light of~\eqref{eq:CcapA}
	we obtain 
	\[
		|A\cap E| = |A\setminus C| = |A| - |A\cap C|
		\overset{\eqref{eq:CcapA}}{\le} 
		s-(3s-n)=n-2s\,. \qedhere
	\]
	\end{proof}
	
We proceed to the main result of this article, which we reformulate as follows. 

\begin{thm}\label{l:k3}
	If $n\ge s\ge 0$, then 
	\begin{equation}\label{eq:1929}
		\ex\le g_3(n,s)=3n^2-15ns+20s^2\,.
	\end{equation}
	Moreover, for $s\in\bigl[\frac 38 n, \frac 25 n\bigr]$ equality holds. 
\end{thm}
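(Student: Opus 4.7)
The lower bound $\ex \ge g_3(n, s)$ for $s \in [3n/8, 2n/5]$ is furnished by Fact~\ref{fact:1744}, and the upper bound for $s \notin (3n/8, 2n/5)$ follows from Fact~\ref{f:kn<s<(k-1)n} with $k = 3$; so I restrict attention to $s \in (3n/8, 2n/5)$. My plan is to fix $G \in \Ex$, apply Lemma~\ref{f:ABs} to conclude $\alpha(G) = s$, and then invoke Lemma~\ref{f:twosets} (replacing $G$ by another member of $\Ex$ if necessary) to arrange that $G$ admits disjoint independent sets $A, B$ of size $s$ together with subsets $A' \subseteq B$ and $B' \subseteq A$ of size $3s - n$ satisfying $K(A', A) \cup K(B', B) \subseteq E(G)$. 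Write $X = V(G) \setminus (A \cup B)$, so $|X| = n - 2s$.

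The next step will be to produce a third independent set $C$ of size $s$ so that either Lemma~\ref{f:ABC} or Lemma~\ref{f:3sets} can be applied. The natural candidates for $C$ are the neighborhoods $N(v)$ of vertices $v$ with $\deg(v) = s$, since triangle-freeness makes any such neighborhood an independent set of exactly $s$ vertices. Two observations will simplify the search. First, since $A$ (respectively $B$) is a maximum independent set, every vertex outside $A$ (respectively $B$) has a neighbor in $A$ (respectively $B$); in particular every $x \in X$ has neighbors in both $A$ and $B$. Second, each vertex $a' \in A'$ satisfies $N(a') = A$ and each $b' \in B'$ satisfies $N(b') = B$ (because $K(A', A) \cup K(B', B) \subseteq E(G)$ and $\Delta(G) \le \alpha(G) = s$), so the neighborhoods of vertices in $A' \cup B'$ do not yield new independent sets.

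The first easy case will be when some $x \in X$ attains $\deg(x) = s$: then $C = N(x)$ meets both $A$ and $B$, and Lemma~\ref{f:ABC} gives $e(G) \le g_3(n, s)$. If instead some $v \in A \setminus B'$ has $\deg(v) = s$ together with $|N(v) \cap X| \ge 3s - n$, then $C = N(v)$ is disjoint from $A$ with $|C \cap B| = s - |N(v) \cap X| \le n - 2s$, so Lemma~\ref{f:3sets} delivers the bound; the analogous observation will cover $v \in B \setminus A'$.

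The hard part will be the residual configuration in which every vertex of $X$ has degree at most $s - 1$ and every degree-$s$ vertex in $(A \setminus B') \cup (B \setminus A')$ has fewer than $3s - n$ neighbors in $X$. Naively double-counting edges under these restrictions yields only $e(G) \le \tfrac12 ns$, which matches $g_3(n, s)$ solely at the endpoints of the interval, so a more refined argument will be required. I expect a suitable $\GZS$-type symmetrization, applied after isolating low-degree vertices in the spirit of Lemma~\ref{f:isolating}, to either force a degree-$s$ vertex to appear in $X$ — reducing back to the Lemma~\ref{f:ABC} case — or collapse $G$ into a blow-up of the Wagner graph $\Gamma_3$, whose edge count equals $3n^2 - 15ns + 20s^2$ by direct calculation.
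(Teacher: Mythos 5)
Your setup (reduce to $s\in(3n/8,2n/5)$ via Fact~\ref{f:kn<s<(k-1)n}, produce $A,B,A',B'$ via Lemma~\ref{f:twosets}) matches the paper exactly, and your two easy cases are correct: when some $x\in X$ has $\deg(x)=s$ its neighbourhood meets both $A$ and $B$, so Lemma~\ref{f:ABC} applies; and when some $v\in A\setminus B'$ has $\deg(v)=s$ with $|N(v)\cap X|\ge 3s-n$, its neighbourhood is disjoint from $A$ and meets $B$ in at most $n-2s$ vertices, so Lemma~\ref{f:3sets} applies. But you acknowledge yourself that the proposal then stalls: the residual configuration (no degree-$s$ vertex of $X$, and all degree-$s$ vertices of $(A\setminus B')\cup(B\setminus A')$ sending few edges into $X$) is left to ``I expect a suitable $\GZS$-type symmetrization... to either force a degree-$s$ vertex to appear in $X$... or collapse $G$ into a blow-up of $\Gamma_3$.'' This is a genuine gap: there is no argument at all for why such a symmetrization would achieve either outcome, and hunting for a degree-$s$ vertex of a particular type is not how the paper closes the proof.

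The paper's actual mechanism is different and worth noting. It argues by taking a minimal counterexample $(n,s)$, and then --- having obtained $A,B,A',B'$ --- deletes one vertex $a\in A'$ and one vertex $b\in B'$ to form $G'=G-\{a,b\}$. If $\alpha(G')=s$, pick any independent $s$-set $C$ in $G'$; since $C\cup\{a\}$ and $C\cup\{b\}$ cannot be independent in $G$ (they have size $s+1$) and $N_G(a)=A$, $N_G(b)=B$, the set $C$ must meet both $A$ and $B$, and Lemma~\ref{f:ABC} gives the contradiction. If instead $\alpha(G')\le s-1$, minimality of $n$ bounds $e(G')$ by $g_3(n-2,s-1)$, and since $ab\in E(G)$ and $\deg_G(a)=\deg_G(b)=s$ we have $e(G)=e(G')+(2s-1)$; the inequality $8s\ge 3n+1$ then yields $e(G)\le g_3(n,s)$, again a contradiction. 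In other words, the paper sidesteps the case analysis over degree-$s$ vertices entirely: the third independent set $C$ is conjured by a two-vertex deletion, not by inspecting neighbourhoods, and the true ``hard case'' is absorbed by induction on $n$ rather than by symmetrization. You would need to supply that vertex-deletion/induction step (or an actual symmetrization argument) for the proposal to become a proof.
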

\begin{proof}
	The statement on equality follows from the first part in view of Fact~\ref{fact:1744}
	(see also Figure~\ref{fig:A8}), so it remains to establish~\eqref{eq:1929}.
	 
	Arguing indirectly we take a counterexample $(n, s)$ with $n$ minimum. 
	The case $k=3$ of Fact~\ref{f:kn<s<(k-1)n} tells us
	\begin{equation}\label{eq:sn3}
		\frac n3 < \frac{3n}8 < s < \frac{2n}5 < \frac n2\,.
	\end{equation}
	By Lemma~\ref{f:twosets} there exists a graph $G\in\Ex$ which contains 
	two disjoint independent sets~$A$ and $B$ of size $s$ such that there are 
	sets $A'\subseteq B$, $B'\subseteq A$ with $|A'|=|B'|=3s-n\overset{\eqref{eq:sn3}}{>}0$ 
	and 
	\[
		K(A',A)\cup K(B',B)\subseteq E(G)\,. 
	\]
	Pick two arbitrary vertices $a\in A'$, $b\in B'$ and set $G'=G-\{a,b\}$. 
	We shall consider two cases depending on the independence number of $G'$.
	
	\smallskip

	{\it \hskip1em  Case 1.\quad  $\alpha (G')=s$.} 

	\medskip

	Take an independent set $C\subseteq V(G')$ of size $s$. Since the set $C\cup \{a\}$ 
	has size $s+1$ and thus fails to be independent in $G$, we have 
	$C\cap A\neq \emptyset$. A similar argument shows $C\cap B\neq\emptyset$ and, hence,
	$C$ is as demanded by Lemma~\ref{f:ABC}. Consequently we have 
	$e(G) \le 3n^2 -15 ns + 20s^2$, which contradicts $(n, s)$ being counterexample.

	\smallskip

	{\it \hskip1em  Case 2.\quad $\alpha (G') \le s-1$.} 

	\medskip

	Observe that the minimality of $n$ leads to 
	\begin{align*}
		 e(G') & \le  3(n-2)^2 - 15(n-2)(s-1)+20(s-1)^2 \\
		   & =  (3n^2 - 15ns + 20s^2) + (3n - 10s + 2)\,.
	\end{align*}
	Since~\eqref{eq:sn3} yields $3n +1\le 8s$, this implies
	\[ 
		e(G') \le (3n^2 - 15ns + 20s^2) + (1-2s)
	\]
	and consequently we have 
	\[
		e(G) = e(G') +(2s-1) \le 3n^2 - 15ns +20s^2\,,
	\]
	which again contradicts the assumption that $(n, s)$ is a counterexample. 
\end{proof}


\begin{bibdiv}
\begin{biblist}
		
\bib{A}{article}{
	author={Andr\'asfai, B.},
	title={\"{U}ber ein Extremalproblem der Graphentheorie},
	journal={Acta Math. Acad. Sci. Hungar.},
	date={1962},
	number={13},
	pages={443--455},
	}

\bib{BE76}{article}{
   author={Bollob{\'a}s, B{\'e}la},
   author={Erd{\H{o}}s, Paul},
   title={On a Ramsey-Tur\'an type problem},
   journal={J. Combinatorial Theory Ser. B},
   volume={21},
   date={1976},
   number={2},
   pages={166--168},
   review={\MR{0424613}},
}
		
\bib{B10}{article}{
   author={Brandt, Stephan},
   title={Triangle-free graphs whose independence number equals the degree},
   journal={Discrete Math.},
   volume={310},
   date={2010},
   number={3},
   pages={662--669},
   issn={0012-365X},
   review={\MR{2564822}},
   doi={10.1016/j.disc.2009.05.021},
}

\bib{BP}{article}{
	author={Brandt, Stephan},
	author={Pisanski, Toma\v{z}},
	title={Another infinite sequence of dense triangle-free graphs},
	journal={Electron. J. Combin.},
	date={1998},
	number={5},
	pages={\#R43}
}

\bib{BT}{article}{
	author={Brandt, Stephan},
	author={Thomass\'e, St\'ephan},
	title={Dense triangle-free graphs are four-colorable: A solution to the 
		Erd\H{o}s-Simonovits problem},
	note={Available from Thomass\'e's webpage at 
		\url{http://perso.ens-lyon.fr/stephan.thomasse/}}	
	}
		
\bib{EHSS}{article}{
   author={Erd{\H{o}}s, P.},
   author={Hajnal, A.},
   author={S{\'o}s, Vera T.},
   author={Szemer{\'e}di, E.},
   title={More results on Ramsey-Tur\'an type problems},
   journal={Combinatorica},
   volume={3},
   date={1983},
   number={1},
   pages={69--81},
   issn={0209-9683},
   review={\MR{716422}},
   doi={10.1007/BF02579342},
}

\bib{ESim}{article}{
   author={Erd\H{o}s, P.},
   author={Simonovits, M.},
   title={A limit theorem in graph theory},
   journal={Studia Sci. Math. Hungar},
   volume={1},
   date={1966},
   pages={51--57},
   issn={0081-6906},
   review={\MR{0205876}},
}

\bib{ES73}{article}{
   author={Erd\H{o}s, P.},
   author={Simonovits, M.},
   title={On a valence problem in extremal graph theory},
   journal={Discrete Math.},
   volume={5},
   date={1973},
   pages={323--334},
   issn={0012-365X},
   review={\MR{342429}},
   doi={10.1016/0012-365X(73)90126-X},
}

\bib{ES69}{article}{
   author={Erd{\H{o}}s, P.},
   author={S{\'o}s, Vera T.},
   title={Some remarks on Ramsey's and Tur\'an's theorem},
   conference={
      title={Combinatorial theory and its applications, II},
      address={Proc. Colloq., Balatonf\"ured},
      date={1969},
   },
   book={
      publisher={North-Holland, Amsterdam},
   },
   date={1970},
   pages={395--404},
   review={\MR{0299512}},
}

\bib{ES}{article}{
   author={Erd\"{o}s, P.},
   author={Stone, A. H.},
   title={On the structure of linear graphs},
   journal={Bull. Amer. Math. Soc.},
   volume={52},
   date={1946},
   pages={1087--1091},
   issn={0002-9904},
   review={\MR{18807}},
   doi={10.1090/S0002-9904-1946-08715-7},
}

\bib{Hall}{article}{
	author={Hall, Philip},
	title={On Representatives of Subsets},
	journal={Journal of the London Mathematical Society},
	volume={10},
	date={1935},
	number={1},
	pages={26--30},
	doi={10.1112/jlms/s1-10.37.26},
}
			
\bib{Lu06}{article}{
	author={{\L}uczak, Tomasz},
	title={On the structure of triangle-free graphs of large minimum degree},
	journal={Combinatorica},
	volume={26},
	date={2006},
	number={4},
	pages={489--493},
	issn={0209-9683},
	review={\MR{2260851 (2007e:05077)}},
	doi={10.1007/s00493-006-0028-8},
}

\bib{Vega}{article}{
	author={{\L}uczak, T.},
	author={Polcyn, J.},
	author={Reiher, Chr.},
	title={Andr\'asfai and Vega graphs in Ramsey-Tur\'an theory},
	eprint={2002.01498},
	note={Submitted},
}

\bib{LR-a}{article}{
   author={L\"{u}ders, Clara Marie},
   author={Reiher, Chr.},
   title={The Ramsey--Tur\'{a}n problem for cliques},
   journal={Israel J. Math.},
   volume={230},
   date={2019},
   number={2},
   pages={613--652},
   issn={0021-2172},
   review={\MR{3940430}},
   doi={10.1007/s11856-019-1831-4},
}		
			
\bib{M}{article}{
	author={Mantel, W.},
	title={Problem 28 (Solution by H. Gouwentak, W. Mantel, J. Teixeira de Mattes, 
		F. Schuh and W. A. Wythoff)},
	journal={Wiskundige Opgaven},
	date={1907},
	number={10},
	pages={60--61}
}

\bib{SS}{article}{
   author={Simonovits, Mikl{\'o}s},
   author={S{\'o}s, Vera T.},
   title={Ramsey-Tur\'an theory},
   note={Combinatorics, graph theory, algorithms and applications},
   journal={Discrete Math.},
   volume={229},
   date={2001},
   number={1-3},
   pages={293--340},
   issn={0012-365X},
   review={\MR{1815611}},
   doi={10.1016/S0012-365X(00)00214-4},
}

\bib{Sz72}{article}{
   author={Szemer{\'e}di, Endre},
   title={On graphs containing no complete subgraph with $4$ vertices},
   language={Hungarian},
   journal={Mat. Lapok},
   volume={23},
   date={1972},
   pages={113--116 (1973)},
   issn={0025-519X},
   review={\MR{0351897}},
}

\bib{T}{article}{
	author={Tur\'an, Paul},
	title={On an extremal problem in graph theory},
	journal={Matematikai \'es Fizikai Lapok (in Hungarian)},
	date={1948},
	pages={436--452}
}

\bib{Zy}{article}{
   author={Zykov, A. A.},
   title={On some properties of linear complexes},
   language={Russian},
   journal={Mat. Sbornik N.S.},
   volume={24(66)},
   date={1949},
   pages={163--188},
   review={\MR{0035428}},
}	
		
\end{biblist}
\end{bibdiv}
\end{document}